\newtheorem{thm}{Theorem} 
\newtheorem{cor}{Corollary}
\newtheorem{Prop}{Proposition}[section]
\newtheorem*{St*}{Statement}
\newtheorem{Dfi}{Definition}
\newtheorem*{theorem*}{Theorem}
\newtheorem*{corollary*}{Corollary}
\theoremstyle{remark}
\theoremstyle{definition}
\theoremstyle{remark}
\newtheorem{oss}{Remark}[section]
\newcommand{\be}{\begin{equation}}
\newcommand{\ee}{\end{equation}}
\newcommand{\R}{\mathbb{R}}
\newcommand{\spt}[1]{\text{spt}\,\|#1\|}
\newcommand\res{\mathop{\hbox{\vrule height 7pt width .5pt depth 0pt
\vrule height .5pt width 6pt depth 0pt}}\nolimits}
\def\eps{\mathop{\varepsilon}}
\def\Hc{\mathop{\mathcal{H}}}
\def\Om{\Omega}
\def\p{\partial}
\def\eps{\mathop{\varepsilon}}
\def\Om{\Omega}
\def\p{\partial}
\DeclareMathAlphabet{\mathscr}{OT1}{pzc}{m}{it}
\begin{document} 

\title{\textbf{Embeddedness of liquid-vapour interfaces in stable equilibrium}}
\author{Costante Bellettini\\
University College London}
\date{}

\maketitle

\begin{abstract}
We consider a classical (capillary) model for a one-phase liquid in equilibrium. The liquid (e.g.~water) is subject to a volume constraint, it does not mix with the surrounding  vapour (e.g.~air), it may come into contact with solid supports (e.g.~a container), and is subject to the action of an analytic potential field (e.g.~gravity). The region occupied by the liquid is described as a set of locally finite perimeter (Caccioppoli set) in $\R^3$; no a priori regularity assumption is made on its boundary. The (twofold) scope in this note is to propose a weakest possible set of mathematical assumptions that sensibly describe a condition of stable equilibrium for the liquid-vapour interface (the capillary surface), and to infer from those that this interface is a smoothly embedded analytic surface. 
(The liquid-solid-vapour junction, or free boundary, can be present but is not analysed here.) The result relies fundamentally on the recent varifold regularity theory developed in \cite{BW}, \cite{BW1}, and on the identification of a suitable formulation of the stability condition.
\end{abstract}

\section{Introduction}
\label{intro}
Given a constant-density, incompressible liquid sitting in a container and subject to a potential field, we denote by $\Om \subset \R^{3}$ the open set identified with the interior of the container and by $E \subset \Om$ the region occupied by the liquid, for the moment assuming $\p E \cap \Om$ smooth and embedded up to $\p \Om$, and $\p \Om$ sufficiently regular, e.g.~$C^1$. The liquid and the surrounding vapour cannot mix. Following a classical capillary model for one-phase liquids (see e.g.~\cite{Finn}, \cite{Maggi}, \cite{Mellet}), the equilibrium condition is obtained by imposing a volume-constrained stationarity condition with respect to the free energy

$$\mathcal{F}(E) = {\Hc}^2(\p E \cap \Om) + \sigma {\Hc}^{2}(\p E \cap \p \Om) + \int_{E} g \,d\mathcal{L}^{3},$$
where $\sigma \in (-1, 1)$ is the adhesive coefficient between the liquid and the walls of the container, ${\Hc}^2$ is the Hausdorff $2$-dimensional measure\footnote{The measure $\mathcal{H}^2$ agrees with the usual surface measure when restricted to a smooth surface.} and $g$ is the potential (for example, $g$ can be the gravitational potential in $\R^3$ given by $g=g_0 \rho \,x_3$, where $\rho$ is the constant density of the liquid and $g_0$ is gravity on Earth). The first term encodes the effect of surface tension on the liquid-vapour interface $\p E \cap \Om$, usually referred to as capillary surface.

The formulation in terms of the free energy $\mathcal{F}$ goes back to the work of Gauss, where the equilibrium condition for $E$ follows from the virtual work principle, as follows. We imagine a continuous deformation of the region $E$ (made so that the deformed region keeps the same volume and stays inside the container), we compute the free energy of the deformed regions and we impose that $\mathcal{F}$ does not decrease to first order along the deformation. This stationarity condition recovers the Young--Laplace's law, i.e.~the following two conditions: (i) the mean curvature of $\p E \cap \Om$ (liquid-vapour interface) is $(g-\lambda) \nu_E$ for some $\lambda \in \R$ (possibly depending on the connected component of the liquid), where $\nu_E$ is the unit outer normal on $\p E$; (ii) $\nu_E \cdot \nu_{\Om}=\sigma$ at the boundary of $\overline{\p E \cap \Om}$ (liquid-vapour-solid junction), where $\nu_{\Om}$ is the unit outer normal on $\p \Om$. In order to reflect physically observable equilibria, one often additionally imposes a second order condition, namely the fact that the equilibrium configuration is stable, by requiring that $\mathcal{F}$ does not increase to second order along the deformation, that is, imposing the non-negativity of the second variation of $\mathcal{F}$. 

\medskip

A more modern formulation of the above problem models the region $E$ occupied by the liquid as a Caccioppoli set\footnote{See Appendix \ref{Caccioppoli} for some reminders.} (rather than a set with smooth boundary), see \cite{Finn}, \cite{Maggi}, \cite{Mellet}. This is partly motivated by the need to pose the variational problem in a class where $E$ has just enough structure to ensure that the energy $\mathcal{F}$ is well-defined, but no further regularity is assumed. Concretely, this will include in the model all equilibrium configurations, and a priori there may be equilibria for which the liquid-vapour interface lacks smoothness. From a mathematical point of view, in order to answer existence questions it is necessary to work in classes that enjoy good compactness properties at the price of possessing less regularity (for example, to use the direct method of calculus of variation or to use min-max arguments): good compactness properties typically fail for smooth submanifolds, and one is required to work with weaker notions that enjoy them (e.g.~reduced boundaries of Caccioppoli sets, currents, varifolds). In the case of Caccioppoli sets that we are interested in, the regularity (smoothness) properties of $\p E$, more precisely of the reduced boundary $\p^* E$, under suitable variational hypotheses, become then a fundamental issue to address.\footnote{The reduced boundary $\p^* C$ is the measure-theoretic notion of boundary for a Caccioppoli set $C$. It is in general merely a rectifiable set, and as such can be very irregular. Any regularity information must be deduced from the specific variational assumptions.}

\medskip

In this note we propose a set of mathematical assumptions on the Caccioppoli set $E$ that reflect a stable equilibrium condition, and that can be considered as weak as possible. We will discuss why such assumptions are mathematically necessary and why they sensibly encode, through the virtual work principle, the notion of stable equilibrium. Some care will be required in stating the variational assumptions via the virtual work principle, since no regularity is assumed on $\p^* E$.
We stress that the stability assumption will involve non-negativity of the second variation (rather than the stronger positivity requirement). Once the assumptions are in place, we will infer from them that, when the potential is analytic, the liquid-vapour interface (i.e.~any portion of $\p^* E$ that does not touch the solid support of the container) is in fact a smoothly embedded (analytic) surface.
The aim is thus to justify, under a minimal set of postulates, the description of a one-phase liquid in stable equilibrium by means of a set whose boundary is smoothly embedded (in the interior of the container). In most of the literature, the liquid-vapour interface is already assumed to be smoothly embedded, in accordance with experimental evidence.

\medskip

We will not be concerned with the free boundary, that is the liquid-vapour-solid junction given by the boundary of the surface-with-boundary $\overline{\p E \cap \Om}$; we address only the interior regularity, i.e.~the regularity of the liquid-vapour interface away from the container. It is then clear that the same results apply to a pendant drop hanging from a syringe, a sessile drop sitting on a desk, or, more generally, to any incompressible liquid with constant density, as long as we stay away from any solid support that may be present. Mathematically, this means that we will only be interested in the energy given by the first and third terms of $\mathcal{F}$.

Our set up is then the following. Let $C$ be a Caccioppoli set in $\R^3$ and let $\Om\subset \R^3$ be an open set (concretely, we can think of $\Om$ as any open subset of $\R^3\setminus S$, where $S$ is the closed set representing all solid supports; note that $\overline{C}$ is not necessarily contained in $\Om$) and let $g:\Om\to \R$ be smooth. The free energy of $C$ in $\Om$ is (here $\chi_C$ stands for the characteristic function of $C$, whose distributional gradient is a Radon measure by definition of Caccioppoli sets, see Appendix \ref{Caccioppoli})
$$\mathcal{E}_{\Om}(C) = \text{Per}_{\Om}(C) + \int_{C\cap {\Om}} g \,d\mathcal{L}^{3} = \|D \chi_C\|(\Om) + \int_{C\cap \Om} g \,d\mathcal{L}^{3} .$$
Our aim is to prove the following result (whose precise statement is given in Section \ref{non_ambient}, Theorem \ref{thm:main}):

\begin{theorem*}
Let $\Om \subset \R^3$ be open, $g:\Om\to \R$ analytic, $E$ a Caccioppoli set in $\R^3$. Assume that $E$ is stationary and stable with respect to $\mathcal{E}_{\Om}$ for volume-preserving deformations inside $\Om$. Then $\p^* E \cap \Om = \p E\cap \Om$ is a smoothly embedded analytic surface.
\end{theorem*}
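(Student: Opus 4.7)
The plan is to interpret the reduced boundary $\p^* E$ as a unit-density integral varifold $V$ in $\Om$ and to invoke the regularity theory for stable prescribed-mean-curvature varifolds developed in \cite{BW}, \cite{BW1}. Once $V$ is shown to be a smoothly embedded hypersurface, analyticity will follow from the fact that the surface solves a quasilinear elliptic equation with analytic right-hand side, by the Morrey--Nirenberg analyticity theorem.

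\medskip

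The first step is a standard Lagrange-multiplier argument: from the stationarity of $\mathcal{E}_\Om$ among volume-preserving compactly supported deformations in $\Om$, one extracts a constant $\la\in\R$ (one per connected component of $E$, if needed) such that for \emph{every} $X\in C^\infty_c(\Om;\R^3)$,
$$\int_{\p^* E \cap \Om} \text{div}_{\p^* E}(X)\,d\Hc^2 + \int_{\p^* E \cap \Om}(g-\la)(X\cdot \nu_E)\,d\Hc^2 = 0.$$
Equivalently, $V$ has generalized mean curvature vector $\vec{H}_V = (\la-g)\nu_E$, which is bounded on compact subsets of $\Om$. This already places $V$ in the class covered by Allard's theorem, giving regularity at $\Hc^2$-a.e.~point of $\p^* E$; the point of the present statement is to upgrade this to regularity \emph{everywhere}.

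\medskip

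The heart of the argument, and the main obstacle, is Step 2: translating the assumed second-order Caccioppoli-set stability into the stability hypothesis of \cite{BW}, \cite{BW1}. Formally, after absorbing the Lagrange multiplier into $\mathcal{E}_\Om-\la\,\mathcal{L}^3(\,\cdot\cap\Om)$, the unconstrained second variation along a normal vector field $\phi\,\nu_E$ would read, on a smooth $\Sigma$,
$$Q(\phi)=\int_\Sigma\bigl(|\nabla \phi|^2 - (|A|^2+\nabla g\cdot \nu_E)\phi^2\bigr)\,d\Hc^2\ge 0,$$
but none of these pointwise quantities is defined a priori for the rough $V$. The task is to formulate the stability postulate on $E$ in purely deformation-theoretic terms---natively available on Caccioppoli sets---and to prove that, together with the mean-curvature identity from Step 1, it implies the varifold stability inequality required in \cite{BW}, \cite{BW1}. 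A further subtlety is handling the zeroth-order ``prescribed mean curvature'' contribution $(\nabla g\cdot\nu_E)\phi^2$, which arises because $H$ varies with position through $g-\la$; this must be absorbed as a bounded lower-order term without destroying the variational structure expected by the regularity theorems.

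\medskip

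With $V$ established as a stable integral $2$-varifold with bounded generalized mean curvature in the prescribed-mean-curvature sense, the regularity theorems of \cite{BW}, \cite{BW1} yield that $\spt{V}\cap\Om$ is a $C^{1,\alpha}$ embedded surface; in particular $\p^* E\cap\Om=\p E\cap\Om$. Schauder theory bootstraps $C^{1,\alpha}$ to $C^\infty$ on the nonparametric mean-curvature equation with right-hand side $\la-g$, and the Morrey--Nirenberg analyticity theorem, applied to the same equation now that its coefficients are analytic, promotes smoothness to real-analyticity, completing the proof.
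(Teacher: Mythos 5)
Your scaffolding (Lagrange multiplier, reduction to \cite{BW}, \cite{BW1}, elliptic bootstrap plus Morrey--Nirenberg at the end) matches the outer shell of the paper's proof, but there is a genuine gap at your Step 2, and it is not a technical subtlety that more careful mollification would fix: the statement you are actually trying to prove there is \emph{false}. Stationarity and stability for volume-preserving \emph{ambient} deformations (which is what your formulation with $X\in C^\infty_c(\Om;\R^3)$ amounts to) do not imply the hypotheses of the regularity theorem of \cite{BW}, \cite{BW1}, and they do not imply embeddedness. The paper exhibits explicit counterexamples with $g\equiv 0$: two half-cylinders of equal radii touching tangentially along a line (structure \textbf{(a)}), two half-spheres touching tangentially at a point (structure \textbf{(a')}), and a triple-wedge configuration (structure \textbf{(b)}). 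Each of these is stationary \emph{and} stable for all admissible volume-preserving ambient deformations --- the second variation is the sum of the separately non-negative second variations of the individual CMC graphs, each of which is strongly stable by a calibration argument --- yet none is embedded. Correspondingly, the theorem of \cite{BW}, \cite{BW1} (Theorem \ref{1} in the paper) does not take a single ``varifold stability inequality'' as hypothesis: it additionally requires, as a structural assumption, that no neighbourhood of $\overline{\p^*E}$ has type \textbf{(a)} or \textbf{(b)}, and even then its conclusion still permits isolated non-embedded points of type \textbf{(a')}. So your plan stalls twice: the BW hypotheses cannot be derived from ambient stability, and the BW conclusion is not yet the embeddedness you claim.

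What is missing is precisely the non-ambient content of the hypothesis: the theorem's phrase ``volume-preserving deformations'' is defined in the paper (Definition \ref{Dfi:station_stable_full}) to include one-sided ``coalescence'' and ``break-up'' deformations of the degenerate immersions describing \textbf{(a)}, \textbf{(a')}, \textbf{(b)}, whose initial speed is not identifiable with an ambient vector field along the touching set. These are used in two genuinely different ways. First, \emph{stationarity} under them rules out \textbf{(a)} and \textbf{(b)}: the first variation of each cuspidal piece (or wedge) carries a singular boundary term $-2\int_T \hat n\cdot X$ along the touching curve $T$ --- morally, infinite mean curvature pointing into the cusp --- so a break-up/coalescence deformation decreases $\mathcal{E}_U$ to first order. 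Second, and unlike \textbf{(a)}, configuration \textbf{(a')} \emph{cannot} be excluded by first variation at all (a smoothed neck can have any mean curvature, e.g.~a Delaunay neck is stationary for $g\equiv 0$); the paper instead constructs a volume-preserving coalescence deformation opening a small hole of radius $t$ at the touching point and shows its first variation vanishes while its second variation equals $-4\pi$ (from differentiating the conormal boundary terms $\int_{\p M_t}\vec n\cdot(\chi\p_r)$ and $\int_{\p N_t}\vec n\cdot(\chi\p_r)$, each contributing $-2\pi$) plus error terms made smaller than $\pi$ by shrinking the cut-off radius $R_0$, contradicting \emph{stability}. Without this two-stage use of the non-ambient deformations --- first variation against \textbf{(a)}, \textbf{(b)} to unlock Theorem \ref{1}, then second variation against \textbf{(a')} to finish --- no formulation of ambient stability, however cleverly weakened or absorbed, can close the argument.
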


In Section \ref{assumptions} we will give a precise meaning to the variational assumptions, i.e.~the notions of stationarity and stability for volume-preserving deformations. For the moment we summarise them, a bit imprecisely, by saying that we will require:

\noindent (i) the vanishing of the first variation (stationarity) and non-negativity of the second variation (stability) with respect to $\mathcal{E}_{\Om}$ for any one-parameter family of diffeomorphisms of $\Om$ whose initial speed is in $C^1_c(\Om;\R^3)$ (``ambient deformations'') and such that the one-parameter family of Caccioppoli sets in $\Om$ obtained by acting with the diffeomorphisms on an arbitrary ``connected component'' of $E$ has constant volume;

\noindent (ii) that if, in a neighbourhood $U\subset \subset \Om$, $\p E$ is given by one of three precisely defined non-embedded structures (that we will denote by \textbf{(a)}, \textbf{(a')}, \textbf{(b)} in Section \ref{non_ambient}, see also Figure \ref{fig:localstructures}), then $E$ is stationary and stable with respect to specific volume-preseving deformations (that we will call ``coalescence'' and ``break up'', see Figures \ref{fig:touching_sing_deform} and \ref{fig:class_sing_deform}).

\medskip

The ambient deformations considered in (i) above are customary in the literature on variational problems such as the one we consider. However, assumption (i) alone does not imply the embeddedness conclusion. We identify in (ii) the missing variational hypotheses that lead to it. In addition to why they are mathematically necessary, we will also discuss why the volume-preserving deformations considered in (i) and (ii) can be deemed admissible, in the sense that they reflect observed deformations of liquids.

The above theorem generalizes the corresponding regularity results available for volume-constrained perimeter-minimisers (\cite{GonzMassTaman}, \cite{GonzMassTaman2}), replacing the minimising (or locally minimising) condition, which involves the $L^1$-topology on Caccioppoli sets, with the weaker stationarity and stability conditions (i) and (ii). Avoiding the use of the $L^1$ topology, (i) and (ii) permit to consider the admissibility of the virtual deformations in relation to concrete liquids (see Remark \ref{oss:minimisers} for further comments on minimisers).

\medskip

As we will see, the proof of the theorem relies heavily on a corollary of \cite{BW}, \cite{BW1}, that we recall in Theorem \ref{1} of Section \ref{non_ambient}. The more general theory developed in \cite{BW}, \cite{BW1} addresses a class of integral varifolds with prescribed mean curvature that satisfy certain stability conditions. Particular emphasis is given, in \cite{BW}, \cite{BW1}, to the fact that the hypotheses made are ``easily checkable'', particularly in view of applications to problems in differential geometry; a key example of such an application is given with the resolution of the existence problem for prescribed-mean-curvature hypersurfaces in \cite{BW2}. In this note, on the other hand, we capitalise on the ``checkability'' feature again (this time in a different sense, namely by drawing a parallel with observed behaviour of liquids) when we need to check whether certain \textit{specific} configurations (\textbf{(a)}, \textbf{(b)}) are stationary. The optimality of the conclusions in \cite{BW}, \cite{BW1} (see Theorem \ref{1}) then permits us to reduce the embeddedness question to the analysis of a \textit{single} specific configuration (\textbf{(a')}), which we will prove not to be stable (Section \ref{finalproof}). It is here that, through somewhat non-standard computations, we will make key use of the stability condition for ``coalescence'' and ``break up'' deformations (mentioned in (ii) above).

\begin{oss}
While the $3$-dimensional setting is natural for the concrete problem, it is mathematically interesting to question the validity of an analogue of Theorem \ref{thm:main} in arbitrary dimensions; similarly, weaker assumptions on $g$ may be investigated. This will be discussed briefly in Appendix \ref{B}. 
\end{oss}

We point out an application to the case of droplets that sit in a potential field in the absence of any solid support. This problem was a motivation for \cite{CirMaggi}, where the $C^2$ embeddedness of the boundary of the droplets is an assumption. Combining the above theorem with \cite{CirMaggi} and with the curvature estimates of \cite{BCW}, we obtain (the precise statement is given in Corollary \ref{cor:droplets}, Section \ref{droplets}):

\begin{corollary*}
Let $g\in \R^3 \to \R$ be analytic and assume that the Caccioppoli set $E \subset \R^3$ is bounded, connected, and it is stationary and stable with respect to the energy $\mathcal{E}_{\R^3}$ for volume-preserving deformations. If $|E|$ is sufficiently small, then $E$ is a perturbation of a single sphere, that is, $\p E = \p^* E$ is given by the graph of a smooth small function on a sphere.
\end{corollary*}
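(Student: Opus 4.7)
The plan is to combine the three inputs named in the statement: the main Theorem of this note supplies the missing regularity hypothesis required by \cite{CirMaggi}, while \cite{BCW} supplies the uniform curvature control needed to pass from ``small volume'' to ``close to a sphere''. Since $g$ is analytic on all of $\R^3$ and $E$ is stationary and stable with respect to $\mathcal{E}_{\R^3}$, the main Theorem applies directly with $\Om=\R^3$, yielding that $\p^*E = \p E$ is a smoothly embedded analytic surface. Because $E$ is bounded, $\p E$ is a compact embedded analytic surface in $\R^3$; by the stationarity condition it has (generalised) mean curvature of the form $(g-\lambda)\nu_E$.

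Now I would invoke the curvature estimates of \cite{BCW}: when $|E|$ is small, the isoperimetric inequality forces the diameter of (each component of) $E$ to be small, and the analytic potential $g$ is nearly constant on such small scales, so the surface $\p E$ is a near-CMC stable embedded surface whose curvature is controlled uniformly in terms of $|E|$ and $\|g\|_{C^1}$ on a compact set containing $E$. After translating and rescaling by the mean curvature (or, equivalently, by $|E|^{1/3}$), $\p E$ is $C^2$-close to a union of unit spheres centred at isolated points.

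At this point the $C^2$-embeddedness hypothesis of \cite{CirMaggi} is verified, so its result applies: any bounded $C^2$-embedded stationary and stable boundary for $\mathcal{E}_{\R^3}$ of sufficiently small volume is a small $C^2$-graph over a finite disjoint union of round spheres. The connectedness assumption on $E$ then forces the number of spheres to be exactly one, since for small $|E|$ two distinct nearby spheres would produce two disjoint connected components of $E$, contradicting connectedness. Hence $\p E$ is the graph of a small smooth function over a single sphere, which is the claimed conclusion.

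The only real step is the verification that the inputs fit together; no new analysis is required beyond the main Theorem. The most delicate point to handle is the quantitative matching between ``$|E|$ sufficiently small'' in our hypothesis and the smallness thresholds in \cite{CirMaggi} and \cite{BCW}: one must ensure that the rescaling used to normalise the curvature does not spoil the stability or the analyticity of the rescaled potential, and that the constants in \cite{BCW} can be chosen to depend only on $g$ and an a priori bound on the diameter (which in turn follows from the isoperimetric inequality applied to the bounded connected set $E$).
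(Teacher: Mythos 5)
Your scaffolding---Theorem \ref{thm:main} for smooth embeddedness, then \cite{CirMaggi}, then \cite{BCW}---matches the paper's proof, but the step where you eliminate multiple spheres contains a genuine gap, and it is exactly the step where the real work lies. You argue that connectedness forces a single sphere because ``two distinct nearby spheres would produce two disjoint connected components of $E$''. This fails: the conclusion of \cite[Theorem 1.1 (i)]{CirMaggi} for small Alexandrov deficit is \emph{not} a disjoint union of graphical spheres; it allows several spheres of nearly equal radii joined by small connecting necks, and such a configuration is connected (with connected closure). This is precisely the degenerate-Delaunay picture the paper emphasises when discussing \textbf{(a')}: unduloids with fixed mean curvature degenerating to a chain of tangent spheres are connected, stationary for $g\equiv 0$, and each piece is graphical hence stable under ambient deformations. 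So connectedness of $\overline{E}$ gives you nothing here; indeed the paper states explicitly that the point of the corollary is that the connected, meniscus-joined configuration is what must be excluded. Your misreading of the \cite{CirMaggi} conclusion as ``a small $C^2$-graph over a finite \emph{disjoint} union of round spheres'' hides exactly this gap.

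The paper's mechanism for the exclusion is quantitative and uses \cite{BCW} in an essential way: the Alexandrov deficit is scale-invariant and tends to $0$ as $|E|\to 0$; after normalising so the spheres have unit radius, a configuration with two or more spheres would have necks shrinking to points, forcing $|A_{\p \hat{E}}|\to\infty$ on the normalised boundary $\p\hat{E}$, while the mean curvature of $\p\hat{E}$ remains bounded. The a priori pointwise curvature estimates of \cite[Theorem 1]{BCW}, valid under stability for volume-preserving ambient deformations (with the mass bound supplied by the unit-multiplicity local convergence to spherical caps), then give a uniform bound on $|A_{\p\hat{E}}|$, a contradiction; hence only one sphere survives. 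Your proposal does cite \cite{BCW}, but only as a preliminary ``curvature control'' used to verify the $C^2$ hypothesis of \cite{CirMaggi}---which is unnecessary, since Theorem \ref{thm:main} already yields smooth embeddedness---and not in the role where it is indispensable. As written, your argument proves at most that $\p E$ is close to a finite collection of neck-joined spheres, not that there is a single one.
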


In other words, droplets tend to remain almost spherical; a configuration where they come into contact and connect to each other by a tiny meniscus is not stable. 

Using the results in \cite{BCW}, in Section \ref{droplets} we also comment more generally (not only in the case of droplets) on the absence of regions of very high curvature in the conclusion of the main theorem.

\subsection{Preliminary heuristic discussion}
\label{heuristic}

The guiding idea in the choice of stationarity and stability assumptions in the main theorem goes back to the implementation of the virtual work principle. For that, one has to decide which ``virtual deformations'' of the liquid are \textit{admissible}. Indeed, the virtual work principle requires a comparison argument, through a deformation of a given bulk of liquid, and it seems natural to admit only those deformations of $E$ that can be thought of as concrete movements of a liquid bulk. We therefore wish to call a deformation admissible if it reflects a deformation that can be concretely induced for a liquid. One should keep in mind that $\p^* E$ is of unknown regularity (for an arbitrary Caccioppoli set $E$) and therefore this task may look ill-posed: the configurations that are concretely realised for liquids are all quite regular (at least at the macroscopic level that the model under consideration is concerned with). We begin with a discussion from which we will extrapolate some of our postulates. 
\medskip

\textit{On connected components.} We start with the case in which $\p E$ is smoothly embedded. Already here, one does not allow all volume-preserving deformations of $E$. For example, if $E$ has two connected components $E_1$ and $E_2$, that sit a positive distance apart, then we do not allow transfer of liquid from one component to the other, rather we require the volume-preserving virtual deformations to separately preserve the volumes of $E_1$ and $E_2$. For instance, $E_1$ and $E_2$ can be two spherical drops, in the case $g\equiv 0$, that do not touch: this is a stable equilibrium. (If we allowed arbitrary volume-preserving deformations of $E$, we would allow one drop to gain volume and the other to lose it. Indeed, if we allowed that, if the initial balls have different radii the stationarity condition would force one sphere to gain volume until the other one disappears; in the case in which the two initial balls have equal radii the same effect would be induced by the stability condition.) These considerations motivate the necessity of Definition \ref{Dfi:vol-pres_Leb} below, which essentially says that liquid can move only within a ``connected component of $E$'' when we perform a virtual deformation.

\medskip

\textit{On coalescence.} We stress that it is however observed experimentally that distinct smooth drops can interact when they are close enough to each other, and the distance at which these forces are felt depends on the characteristics of the liquid and surrounding vapour. An observable effect of these interactions is the fact that, if two drops are brought close to each other in a quasi-static way, once they are close enough a thin connecting neck (meniscus) forms\footnote{A standard miscroscopic explanation of this effect is the following: liquid molecules are attracted to each other, and this results primarily in surface tension, since for molecules at the boundary of a drop the attraction is mainly on one side. If two drops are sufficiently close, then the molecules at the boundary also feel the attraction towards the other drop: this causes the formation of a meniscus connecting the drops. The specific properties of the liquid and of the vapour sorrounding it affect the distance at which these interactions are meaningful.}. Our energy $\mathcal{E}_{\Om}$ does not account for these distant interactions (while other models do, e.g.~Allen-Cahn); however, we wish to keep track of them in the ``limit case'' when the distance between the two drops becomes $0$. From this perspective, we regard the contact set of two smooth drops $E_1$ and $E_2$ that touch tangentially (along a submanifold of dimension $\leq 1$) as a mathematical idealization of a thin connecting neck, or meniscus.
While the energy $\mathcal{E}_{\Om}$ (just like in the case of positive distance) sees no interaction between $E_1$ and $E_2$ when they touch tangentially, we encode the interaction effects mathematically by \textit{allowing} virtual deformations in which the touching set may be ``smoothed out'' into a thicker neck (we will call such a virtual deformation ``coalescence'', see Fig. \ref{fig:touching_sing_deform}, top row). The virtual work principle for such a deformation will show that surface tension (acting via the perimeter term in $\mathcal{E}_{\Om}$) favours the coalescence of tangentially touching drops (in accordance with experiments). Exploiting stability (particularly the condition that we identify in (ii) that follows the main theorem) will be essential for this, since stationarity alone does not always suffice.

\begin{oss}
We have just discussed ``virtual coalescence'' for distinct \textit{smooth} drops touching, a case in which the deformation can be concretely visualized and thus related to observable behaviour of liquids (and therefore deemed admissible for the virtual work principle). However, we are interested in boundaries of Caccioppoli sets, that are potentially very irregular. What would ``touching drops'' and ``connecting meniscus'' mean in that context, and would it be possible to define ``coalescence'' without knowing what shape we start from? The answer is probably no. As we will see, a key advantage of the ``checkability'' of the hypotheses in \cite{BW}, \cite{BW1} is that we only need to discuss ``virtual coalescence'' in the case of drops with \textit{regular} shapes that touch in a \textit{regular} fashion: in particular, we can concretely visualize the bulks of liquid that come into contact, and it thus makes sense to consider virtual deformations such as coalescence.
\end{oss}

\section{Variational assumptions and main result}
\label{assumptions}
\subsection{Ambient deformations}
\label{ambient}

Restricting, to begin with, to the classical case in which $\p E \cap \Om$ is known to be a smooth embedded surface, the virtual deformations that are used to reach the conclusion that the mean curvature of $\p E \cap \Om$ is given by $(g-\lambda) \nu$ (for some $\lambda\in \R$ possibly depending on the connected component, and with $\nu$ being the unit outer normal) are those induced by ambient diffeomorphisms that fix $E$ outside an arbitrary open set $U \subset \subset \Om$ in which $E\cap U$ is connected. This means that for any one-parameter family $\psi_t$ of ambient diffeomorphisms that keep $\R^{3}\setminus U$ fixed (here $t\in (-\eps, \eps)$, $\psi_t(x)=\psi(t,x)$ is $C^1$ in $t$, $\psi_t:\R^{3} \to \R^{3}$ is a $C^1$ diffeomorphism for every $t$, with $\psi_t = Id$ for $t=0$ and $\psi_t|_{\R^{3}\setminus U} = Id$ for every $t$) and that preserve the volume of $E$ (i.e.~$|\psi_t(E) \cap U|=|E \cap U|$ for all $t$) we require stationarity with respect to $\mathcal{E}_U$ at $t=0$ along the deformation, i.e.
$$\left.\frac{d}{dt}\right|_{t=0}\mathcal{E}_U(\psi_t(E))=0.$$
These deformations reflect the fact that we may slighly perturb the liquid in $U$ ``as a whole'', respecting the volume constraint. Deformations of the type just considered will be called \textit{ambient deformations} and the stationarity condition just stated leads to the mean curvature characterization recalled above (see \cite{Maggi} or Section \ref{A} below).

\medskip

We pass now to the case in which $E$ is only assumed to be a Caccioppoli set. What is an admissible volume-preserving virtual deformation of $E$? The notion of admissible virtual deformation implicitly involves questioning whether such a deformation is meaningful for a concrete liquid. At the same time, as we mentioned earlier, all configurations that are experimentally observed are rather regular, therefore the task to characterise \textit{all} admissible volume-preserving virtual deformations of $E$ is likely ill-posed ($\p E$ and of $\p^* E$ could be a priori extremely irregular). Fortunately we do not need to characterise all meaningful deformations, but only sufficiently many types of deformations to allow the regularity conclusion of the main theorem (Theorem \ref{thm:main} below). 

Deformations that are natural (and indispensable to infer any amount of regularity) are the ambient ones introduced above. The derivative $\left.\frac{\p}{\p t}\right|_{t=0} \psi_t$ is the initial speed by which $E$ moves in $U$; one can think of it as induced by the (short-lasting) action of an external field, that causes a ``slight shaking'' of the liquid bulk. For such a deformation  to make sense we do not need to know anything about the shape of the liquid bulk, because the liquid is perturbed as a whole in the relevant open set. Hence we admit such virtual deformations in our analysis. We only need some care in implementing the volume constraint.
We postulate the following, in accordance with the discussion in Section \ref{heuristic}.

\begin{Dfi}
\label{Dfi:vol-pres_Leb}
Admissible volume-preserving ambient deformations of $E$ in $U$ are those ambient deformations that separately preserve the volume of each connected component of $\overline{E^L} \cap U$, where $E^L$ is the Lebesgue representative of $E$ and $\overline{E^L}$ denotes its closure. 
\end{Dfi}

The choice of the Lebesgue representative (i.e.~the set of points at which the density of $E$ is respect to $\mathcal{L}^{3}$ is $1$) is needed in order to reflect that a virtual deformation allows liquid transfer only through points where there is actually some liquid\footnote{Given two distant Caccioppoli sets $E_1$ and $E_2$ in $\R^{n+1}$, $n\geq 1$, we can always connect them using a curve $\gamma$ and the set $E_1 \cup E_2 \cup \gamma$ is equivalent, as a set of finite perimeter, to $E_1 \cup E_2$. The curve $\gamma$ is of course an artificial addition of liquid: choosing the Lebesgue representative will remove it. Recall that sets of finite perimeter in $\R^{n+1}$ that differ by a $\mathcal{L}^{n+1}$-negligeable set are actually the same set, from the geometric measure theory perspective, so it is enough to work on one representative.}.
To simplify notation \textit{we will assume that $E$ is its own Lebesgue representative}.
The stationarity requirement with respect to ambient volume-preserving deformations, i.e.~the fact that
\begin{equation}
\label{eq:stationarity}
\left.\frac{d}{dt}\right|_{t=0}\mathcal{E}_U(\psi_t(E))=0
\end{equation}
for every admissible ambient volume-preserving deformation $\psi_t$, leads to the condition that (see Section \ref{A}), whenever $U \subset \subset \Om$ is such that $\overline{E}\cap U$ is connected, then $|\p^* E| \res U$ has first variation in $L^\infty$ and generalized mean curvature equal to $g - \lambda$ for some $\lambda \in \R$ (possibly depending on the connected component of $\overline{E} \cap U$). Here $|\p^* E|$ has to be interpreted as the integral varifold obtained by assigning multiplicity $1$ a.e.~on the reduced boundary $\p^* E$, and $|\p^* E|\res U$ denotes its restriction to $U$. Integral varifolds are weak notions of submanifolds and the generalised mean curvature coincides with the usual mean curvature in the case in which the integral varifold ``is actually a submanifold''.
The $L^\infty$ condition on the first variation is not sufficient\footnote{If, for example, we had Lipschitz regularity of $\p E$ we could bootstrap via elliptic theory to $C^{2,\alpha}$ and higher. The example produced in \cite{Brak} shows that the $L^\infty$ condition is too weak for this purpose.}\footnote{In the special case $g\equiv 0$ and assuming that $E$ is bounded and $\Om=\R^{n+1}$, \cite{DelgMag} strikingly proves that, under only this stationarity condition, $E$ must equal a collection of balls with equal radii; the arguments in \cite{DelgMag} exploit however global properties. A \textit{local} regularity result (that goes beyond Allard's theorem) under this stationarity assumption alone does not seem within reach at the moment, and at any rate it would have to allow for non-embedded points, as we will also see below. The local feature is important for us in that it allows the applicability of the result to the interior points of the capillary surface in the presence of solid supports.} to imply enough local regularity for $\p^* E$. Therefore one is led to introduce more restrictive variational assumptions, as in the case treated there, where the equilibrium is assumed to be stable. This also makes sense from a physical point of view, since experimentally observable configurations are stable. The stability condition for volume-preserving ambient deformations $\psi_t$ amounts to the non-negativity of the second variation of $\mathcal{E}_U$ (note that we do not require its strict positivity):
\begin{equation}
\label{eq:stability}
\left.\frac{d^2}{dt^2}\right|_{t=0}\mathcal{E}_U(\psi_t(E))\geq 0.
\end{equation}
The stationarity condition (\ref{eq:stationarity}) for volume-preseving ambient deformations has the following equivalent formulation by means of a Lagrange multiplier (we will prove this in Section \ref{A}): whenever $U \subset \subset \Om$ is such that $\overline{E} \cap U$ is connected, then there exists $\lambda \in \R$ such that $E$ is stationary with respect to the functional $\mathcal{E}_U(E) - \lambda |E \cap U|$ for arbitrary ambient deformations that fix $\R^{3}\setminus U$ (not necessarily volume-preserving ones). In other words, the volume constraint gets encoded in the energy functional. On the contrary, prescribing the non-negativity of the second variation of $\mathcal{E}_U(E) - \lambda |E \cap U|$ for arbitrary ambient deformations is strictly stronger than the stability requirement that we have given in (\ref{eq:stability}) (as explained e.g.~in \cite{BarbDoCarmo}); so we will keep the formulation given in (\ref{eq:stability}).\footnote{Stability for $\mathcal{E}_U(E) - \lambda |E \cap U|$ for arbitrary deformations is usually called strong stability; stability under volume-preserving deformations is usually referred to as weak stability and it implies in particular that the Morse index of $\mathcal{E}_U(E) - \lambda |E \cap U|$ (for arbitrary ambient deformations) is at most $1$.}

\medskip

Having agreed that prescribing the validity of (\ref{eq:stationarity}) and (\ref{eq:stability}) for all volume-preserving ambient deformations is coherent with the description of a stable equilibrium, a natural question is whether these assumtions are sufficient to characterise stable equilibria. However, as we will now describe, certain local structures for $\p E$ are not ruled out by (\ref{eq:stationarity}) and (\ref{eq:stability}), and such configurations are considered with consensus not to describe stable equilibria of the one-phase liquids under consideration. Here are the three local structures:

\begin{description}
 \item[(a)] there exists an open set $U \subset \subset \Om$ such that $\p E \cap U = D_1 \cup D_2$, where $D_1$ and $D_2$ are smooth embedded surfaces in $U$ that intersect tangentially along a smooth $1$-dimensional submanifold. 
\end{description}

\begin{description}
 \item[(a')] there exists an open set $U \subset \subset \Om$ such that $\p E \cap U$ is the union of exactly two smoothly embedded disks $D_1$ and $D_2$ that intersect tangentially at a point. 
\end{description}

\begin{description}
 \item[(b)] there exists an open set $U \subset \subset \Om$ such that $\p E \cap U = \cup_{j=1}^{2N} L_j$, where the $L_j$'s are surfaces-with-boundary that have a common boundary $T$; $L_j$ are smooth away from $T$, $T$ is of class $C^{1,\alpha}$; all the $L_j$'s are $C^{1,\alpha}$ up to their boundary $T$; at least two of the $L_j$'s intersect transversely.
\end{description}

\begin{figure}[h]
\centering
 \includegraphics[width=6cm]{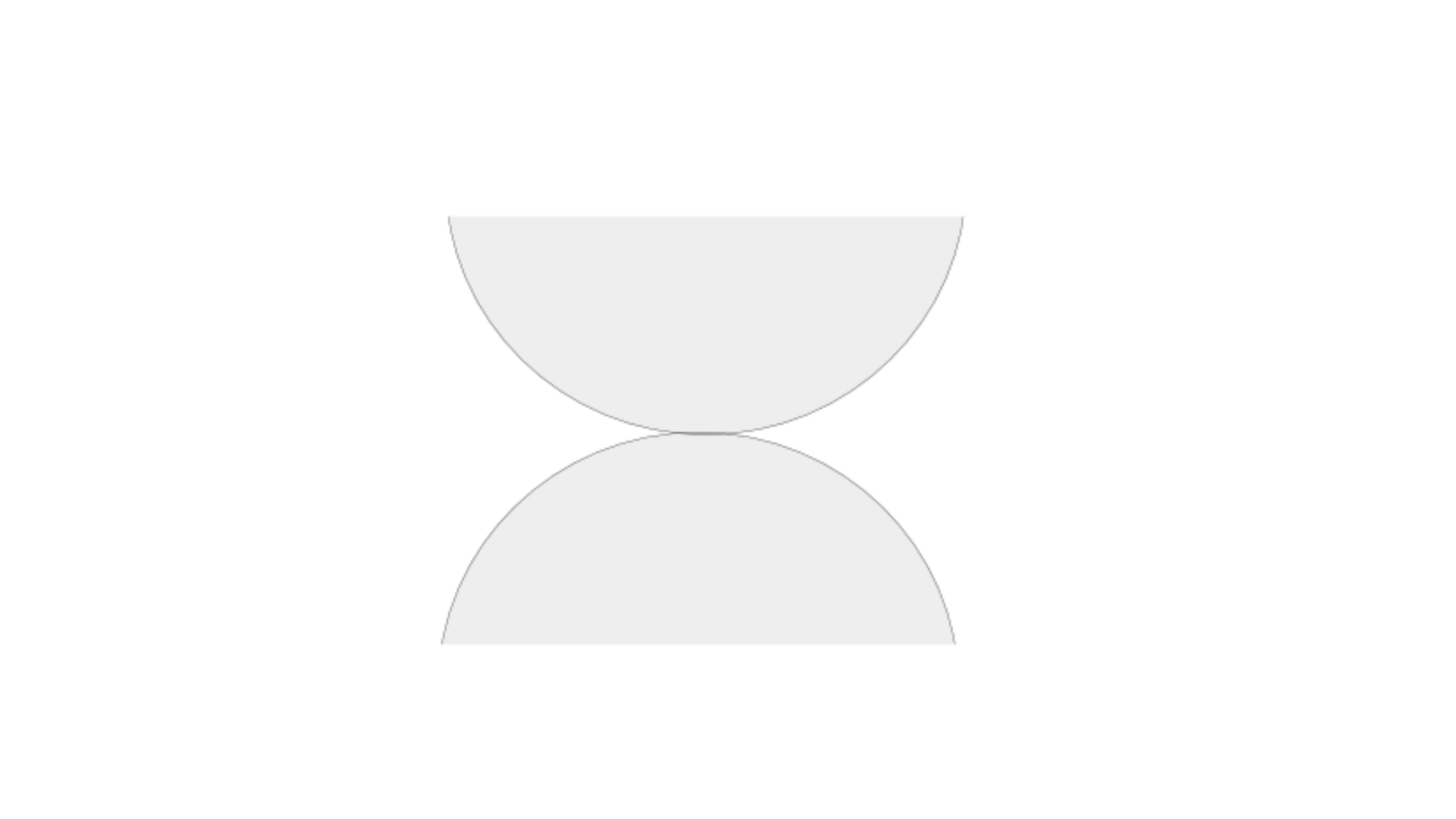} 
 \includegraphics[width=6cm]{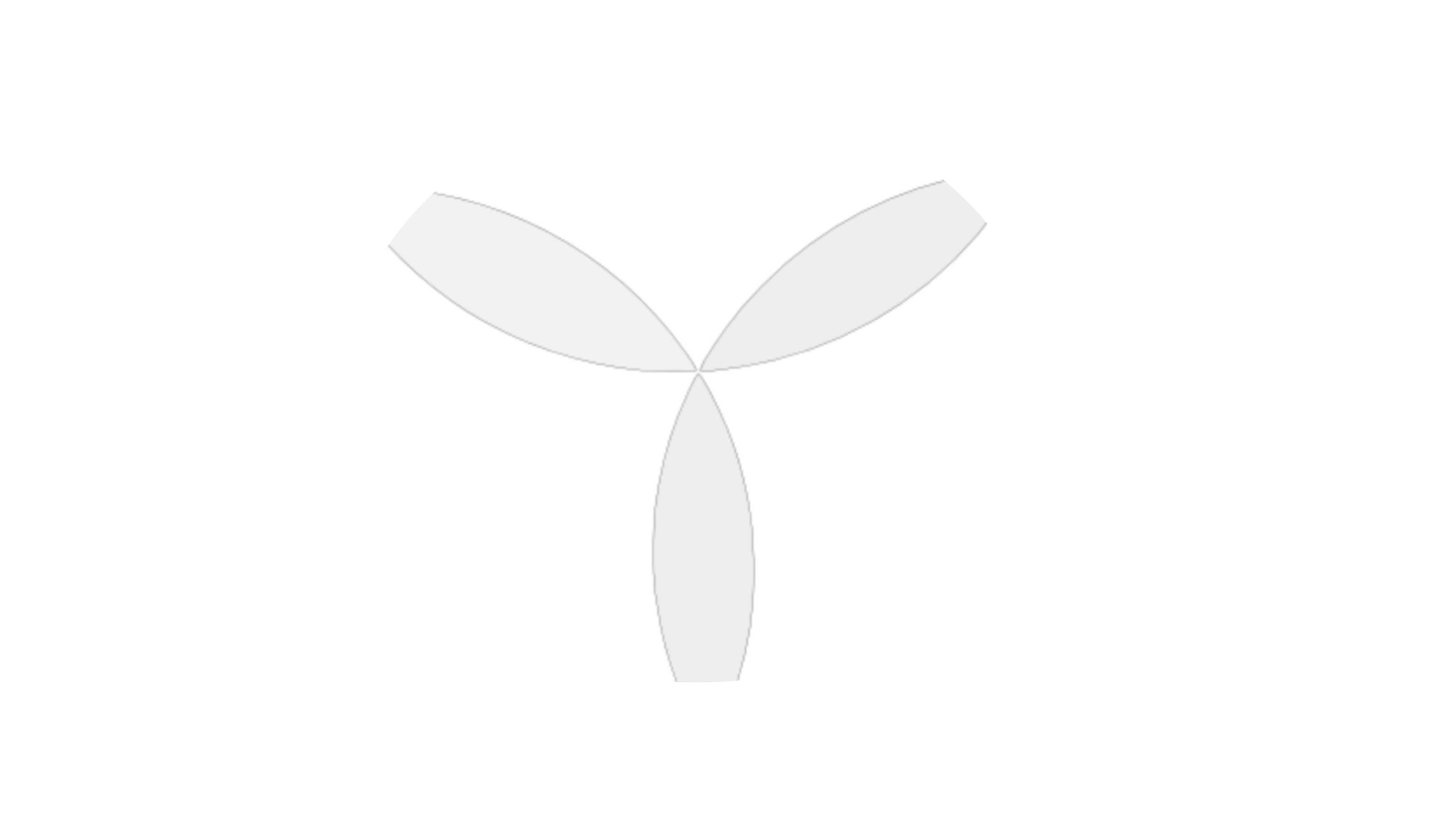} 
\caption{On the left, the cross section of two half cylinders of equal radii touching along a line, an example of (\textbf{a}). The same cross section can be obtained from two half-spheres of equal radii touching tangentially at a point: this is an example of (\textbf{a'}). On the right, the cylinder over the depicted set is an example of (\textbf{b}). The three wedges depicted are defined by drawing three arcs of circles with equal radii with centres at the verteces of an equilateral triangle, and the three arcs pass through a common point. All these examples fulfil, with $g\equiv 0$,
the stationarity and stability conditions (\ref{eq:stationarity}) and (\ref{eq:stability}) for admissible volume-preserving ambient deformations.}
 \label{fig:localstructures}
\end{figure}

The union of two half cylinders with equal radii and touching tangentially along a line (Figure \ref{fig:localstructures}, left) provides an example of the local structure (\textbf{a}) and, assuming $g\equiv 0$, this configuration satisfies (\ref{eq:stationarity}) and (\ref{eq:stability}) for any volume-preserving ambient deformation whose initial velocity is compactly supported in a neighbourhood of a point on the touching line. The stationarity condition is true in view of the fact that cylinders are CMC hypersurfaces with the same mean curvature. The stability condition (\ref{eq:stability}) follows by noticing that each half cylinder is in fact strongly stable, i.e.~stable with respect to $\text{Per}_U(\p C) -\lambda |C\cap U|$ (where $\lambda$ is the mean curvature of the cylinders) for arbitrary ambient deformations; this follows by a well-known semi-calibration argument exploiting the fact that each half-cylinder is a graph (see e.g.~\cite[Appendix B]{BW}). The second variation for an arbitrary ambient deformation, computed for the union of the two half cylinders, can be obtained by adding up the two non-negative contributions computed separately on each half cylinder. This implies the weaker condition (\ref{eq:stability}) for the union of the two half-cylinders.

The union of two half spheres with equal radii and touching tangentially at a point provides an example of the local structure (\textbf{a'}) and, assuming $g\equiv 0$, this configuration satisfies (\ref{eq:stationarity}) and (\ref{eq:stability}) for any ambient deformation whose initial velocity is compactly supported in a neighbourhood of the touching point. The argument is the same as for the half-cylinders, exploiting the fact that each half-sphere is a CMC graph.

For an example of (\textbf{b}) with $N=3$, consider the picture in Figure \ref{fig:localstructures} on the right (more precisely, the product with an interval in $\R^3$). Assuming $g\equiv 0$, this configuration satisfies (\ref{eq:stationarity}) and (\ref{eq:stability}) for any ambient deformation that only acts in a neighbourhood of the central point. The stationarity condition is true since the three cylinders employed are CMC surfaces. For the stability condition, we use once again that the three cylinders are separately strongly stable (since they are graphs). Note that the enclosed volume, for each of the three cylinders taken separately, corresponds to the volume of the portion into which the mean curvature vector points. If we add up the three characteristic functions of these portions, we obtain the constant $1$ plus the characteristic function of the depicted set. Hence, when we consider an ambient deformation (that only acts in a neighbourhood of the centre point), the sum of the three second variations of volume agrees with the second variation of volume of the set in question (changing the functional by a constant term does not affect the second variation). Therefore we can add the three separate second variations of $\text{Per}(\cdot) - \lambda |\cdot \cap U|$ to conclude that the set in question satisfies (\ref{eq:stability}) (in fact, we conclude that the set is strongly stable with respect to $\text{Per}(\cdot) - \lambda |\cdot \cap U|$, for arbitary ambient deformations).

The previous examples show that (\textbf{a}), (\textbf{a'}) and (\textbf{b}) must be permitted in the conclusion of any regularity result that only assumes  (\ref{eq:stationarity}) and (\ref{eq:stability}). As we mentioned, these structures do not reflect observed equilibria,
therefore one needs to identify extra assumptions (in addition to (\ref{eq:stationarity}) and (\ref{eq:stability})) in order to have a meaningful framework that selects observed equilibria. We will discuss this in Section \ref{non_ambient}.

It is an open problem to understand what amount of regularity can be obtained for $\p E$ under assumptions (\ref{eq:stationarity}) and (\ref{eq:stability}) alone, and in principle non-embedded configurations other than (\textbf{a}), (\textbf{a'}) and (\textbf{b}) may be allowed by (\ref{eq:stationarity}) and (\ref{eq:stability}). As we will see in Section \ref{non_ambient}, we do not need to analyse or rule out anything other than the three configurations described, ultimately thanks to the results in \cite{BW}, \cite{BW1}.

\subsection{Non-ambient deformations (and major gain in regularity)}
\label{non_ambient}

To analyse configurations \textbf{(a)}, \textbf{(a')} and \textbf{(b)} and give a mathematical explanation of why they are not observed equilibria, we recall the discussion in Section \ref{heuristic}, where we pointed out that when distinct drops (with regular shapes) touch tangentially, the touching set should be interpreted as an idealization of thin connecting meniscus, and therefore we can consider virtual deformations that smooth out the configuration by thickening (or, similarly, breaking off) the meniscus. These are clearly not ambient deformations and they alter the topology of $\stackrel{\circ}{E}$ or of $\overline{E}$; however, these deformations reflect observed behaviours of liquids, and for this reason we consider them admissible\footnote{Each of the structures \textbf{(a)}, \textbf{(a')} and \textbf{(b)}, although not given by a smoothly embedded hypersurface, is defined by regular enough (at least $C^{1,\alpha}$) pieces, that come together in a well-defined regular fashion. It is this key feature that makes it possible to consider virtual deformations of such structures and ask whether they are physically admissible, because it allows to relate the deformation to observable behaviours of liquid drops. Recall that for an arbitrary Caccioppoli set $E$ the reduced boundary $\p^* E$ is merely a rectifiable set.}.  
Configurations \textbf{(a)} and \textbf{(b)} can be treated in the same way, while \textbf{(a')} has to be treated separately. 

For \textbf{(a)} or \textbf{(b)} we will produce a volume-preserving virtual deformation in $U$ of type ``coalescence'' or ``breakup'' that decreases the energy $\mathcal{E}_U(E)$ to first order and thus violates stationarity. We describe now the key idea, while the formal arguments will be given in Section \ref{C}.

For \textbf{(a)}, denoting by $T$ the curve along which the two disks coincide, we want to consider virtual volume-preserving deformations of $E$ as those depicted in Figure \ref{fig:touching_sing_deform}. From the point of view of computing the first variation, it is irrelevant whether the region between the disks $D_1$ and $D_2$ is in the complement of $E$ or in $E$ (one of these two eventualities must occur): both cases are depicted, in the said order, in Figure \ref{fig:touching_sing_deform}. The deformation in questions changes the topology around $T$ (so it is not an ambient deformation) and corresponds in the former case to the coalescence of the two bulks of liquid; in the latter case it corresponds to a break up of the two bulks at $T$. Such a deformation will act on $\p E \cap U$ by independently smoothing out the cusps of the two degenerate immersions that describe respectively the portion of $\p E \cap U$ to the left of $T$ and to the right of $T$. The mean curvature of these degenerate immersions at $T$ is ``infinite'', due to the presence of a cusp (roughly, the principal curvature in the direction of $T$ is finite, while the one in the direction orthogonal to $T$ is infinite due to the cusp), and points into the cuspidal domain. As the driving gradient of $\mathcal{E}_U (\cdot)$ is given by the mean curvature plus a finite term coming from the potential term, ``thickening the neck'' along $T$ via coalescence or break up decreases the energy to first order (thus violating stationarity).

Passing to the local structure \textbf{(b)}, we note that it corresponds to the case in which $E$ is the union of $N$ ``wedges'' that come together in a $C^{1,\alpha}$ way along a common curve $T$ (at least one wedge has an opening angle at $T$ smaller than $\pi$). The non ambient deformations needed here are those that permit a breakup phenomenon or a coalescence phenomenon at $T$. This can once again be explained by means of first variation arguments, exploiting that the mean curvature vector of each wedge is ``infinite'' at $T$ and points inside the wedge, and employing the deformation that pushes a wedge to its interior around $T$ (or that makes two wedges coalesce around $T$, see Figure \ref{fig:class_sing_deform}).

The previous ideas (partly present in \cite{BW}, \cite{BW1} and detailed in Section \ref{C} in the context addressed here) will guarantee that, as long as the virtual volume-preserving deformations ``coalescence'' and ``break up'' are allowed in the definition of stationarity, structures (\textbf{a}) and (\textbf{b}) are nowhere present in $\p E$. These virtual deformations reflect observed liquid behaviour, so we include them among the admissible ones. 

Then we are in a position to use the following fundamental result  (within the proof of our main theorem). Its first assumption is implied by stationarity for volume-preserving ambient deformations, i.e.~the validity of (\ref{eq:stationarity}), as recalled in Section \ref{ambient}. The validity of (\ref{eq:stationarity}) and (\ref{eq:stability}) implies immediately the validity of the third assumption. The second assumption is implied by the stationarity condition for virtual volume-preserving deformations ``coalescence'' and ``breakup'' discussed above.

\begin{thm}[\cite{BW} and \cite{BW1}]
\label{1}
Let $\Om \subset \R^{3}$ be an open set and let $g$ be an analytic function on $\Om$. Let $E \subset \R^3$ be a Caccioppoli set such that
\begin{itemize}
 \item the first variation of $|\p^*E| \res \Om$ (the multiplicity-$1$ varifold associated to the reduced boundary of $E$) is representable by integration as a vector-valued measure $h\,\mathcal{H}^2 \res (\p^* E \cap \Om)$ with $h \in L^\infty( \mathcal{H}^2 \res (\p^* E \cap \Om);\R^3)$;
 
 \item there exists no open set $U \subset\subset \Om$ such that $\overline{\p^*E} \cap U$ is of the type (\textbf{a}) or (\textbf{b});
 
 \item for every $U \subset \subset \Om$ such that $\overline{\p^*E} \cap U$ is a $C^1$ embedded surface, the set $E$ is stationary with respect to $\mathcal{E}_U$ for volume-preserving ambient deformations that fix $\Om \setminus U$. For every $U \subset \subset \Om$ such that $\overline{\p^*E} \cap U$ is a $C^2$ embedded surface, the set $E$ is stable with respect to $\mathcal{E}_U$ for volume-preserving ambient deformations that fix $\Om \setminus U$.
\end{itemize}
Then  $\overline{\p^* E} \cap \Om$ is smoothly embedded (also analytic) away from a (possibly empty) set of points that are isolated in $\Om$, i.e.~for every such point $p$ there exists a neighbourhood $U_p$ contained in $\Om$ such that $(\overline{\p^* E} \setminus \{p\}) \cap U_p$ is smoothly embedded. Locally around any such $p$, the structure of $\overline{\p^*E}$ is the one in \textbf{(a')}.
\end{thm}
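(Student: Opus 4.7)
The plan is to apply the regularity theorem of \cite{BW, BW1} to the multiplicity-one integral varifold $V = |\p^* E|\res \Om$, after certifying that the three hypotheses in the statement translate precisely into the input required by that theorem. The first hypothesis is already exactly a bounded-mean-curvature statement for $V$. The third hypothesis, through the Lagrange multiplier reformulation recalled in Section \ref{ambient}, promotes stationarity under volume-preserving ambient deformations on any open set where $\overline{\p^* E}$ is $C^1$-embedded to the prescribed mean curvature relation $H = (g-\la)\nu$ for some (locally) constant $\la\in \R$; this identifies $V$ as a prescribed-mean-curvature varifold with analytic prescription on its regular part. Similarly, stability under volume-preserving ambient deformations on $C^2$-pieces yields the variational stability hypothesis of \cite{BW, BW1}. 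The second hypothesis rules out the two non-embedded local configurations (\textbf{a}) and (\textbf{b}) that would otherwise obstruct the regularity conclusion.

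Once these ingredients are matched with the input of the theorems in \cite{BW, BW1}, their output gives directly that $\overline{\p^* E} \cap \Om$ is $C^\infty$-smoothly embedded away from a set of points isolated in $\Om$, and that at each such isolated point the tangent-cone analysis and singularity classification carried out in \cite{BW, BW1} force the local structure to be precisely (\textbf{a'}), i.e.\ two tangentially touching $C^2$-embedded disks. I would stress that, crucially, the classification there does \emph{not} exclude (\textbf{a'}) on its own — which is consistent with the fact that (\textbf{a'}) is indeed compatible with the ambient stationarity and stability alone, as observed in Section \ref{ambient}; ruling out (\textbf{a'}) will later require the extra non-ambient deformations discussed in Section \ref{non_ambient}.

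It then remains to upgrade $C^\infty$ smoothness to analyticity on the regular part. At any regular point, I would represent $\overline{\p^* E}$ locally as the graph of a smooth function $u$ over its tangent plane; the condition $H = (g-\la)\nu$ then becomes a quasilinear second-order elliptic equation for $u$ whose coefficients depend analytically on $u$, $Du$, and (via $g$) on the point variable. Morrey's classical analyticity theorem for smooth solutions of elliptic equations with analytic coefficients then yields analyticity of $u$, hence of $\overline{\p^* E}\cap \Om$ on its regular part.

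The principal obstacle is not in the bookkeeping just sketched but in the theorem being invoked: the proofs of \cite{BW, BW1} require sheeting results, a delicate tangent-cone classification under variational stability with prescribed mean curvature, and an argument forcing the singular set to be discrete of the very specific type (\textbf{a'}). In the present application all of this is used as a black box; the content of the proof is the verification that our three hypotheses match those of \cite{BW, BW1}, together with the routine analytic-coefficients bootstrap at the end.
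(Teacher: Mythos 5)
Your overall route is the paper's own: Theorem \ref{1} is presented as a special case of the results of \cite{BW}, \cite{BW1}, and the proof consists of matching hypotheses plus a standard elliptic bootstrap at the end. However, there is a genuine gap precisely at the point you dismiss as ``bookkeeping''. The stability hypothesis actually required in \cite{BW}, \cite{BW1} is a stability condition for the $C^2$-\emph{immersed} portion of the support of the varifold --- including points where two sheets touch tangentially --- whereas the third hypothesis of Theorem \ref{1} only supplies stability on open sets where $\overline{\p^* E}$ is a $C^2$-\emph{embedded} surface. Upgrading embedded stability to immersed stability is the one non-trivial verification in the deduction, and it is exactly where the analyticity of $g$ enters --- not, as in your write-up, only in the final regularity bootstrap. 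The argument (indicated in Appendix \ref{B} of the paper) is: if two smooth, non-identically-coinciding disks $D_1$, $D_2$ with mean curvature prescribed by the analytic $g$ intersect only tangentially, their coincidence set is an analytic set, hence stratifies as a union of analytic submanifolds of dimensions $0$ and $1$; a $1$-dimensional stratum would force the local structure \textbf{(a)}, which the second hypothesis excludes; hence the coincidence set consists of isolated points, and the stability inequality, valid on the embedded parts thanks to the stated hypothesis, extends across those isolated points by a standard capacity argument. Without this step your verification does not match the input of \cite{BW}, \cite{BW1}: stability on embedded pieces alone is strictly weaker, and if $g$ were merely smooth the coincidence set could a priori be much larger, with no way to run the capacity argument --- which is why analyticity of $g$ appears in the statement at all.

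The rest of your proposal is consistent with the paper: the Lagrange multiplier reformulation giving $H=(g-\lambda)\nu$ with locally constant $\lambda$ is exactly Proposition \ref{Prop:Lagr_mult} and Proposition \ref{Prop:Lagr_mult_mean_curv}; your observation that the classification in \cite{BW}, \cite{BW1} does not exclude \textbf{(a')} (so that ruling it out must come later from the non-ambient deformations) is correct and matches the paper's logic; and the final smooth-to-analytic upgrade via elliptic theory with analytic coefficients is what the paper notes in the remark following Theorem \ref{1}, where it is also emphasised that the difficult conclusion is the $C^2$ embeddedness away from the isolated \textbf{(a')} points, which is taken wholesale from \cite{BW}, \cite{BW1}.
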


The above theorem is a special case of the results in \cite{BW} and \cite{BW1} (see also Appendix \ref{B}) and will be applied, in our case, to every connected component of $\overline{E}\cap \Om$, where $E$ is the Lebesgue representative of the Caccioppoli set that denotes the region occupied by the liquid.
Theorem \ref{1} provides the first (and major) gain in regularity, giving a very precise characterisation of $\p E$; the leftover lack of embeddedness is confined to isolated points, where the only allowed structure is \textbf{(a')}.

\begin{oss}
The difficult conclusion, in Theorem \ref{1}, is the $C^2$ embeddedness away from isolated points at which the structure is \textbf{(a')}. From there, higher regularity and analyticity of the surface are immediate by elliptic PDE theory, since $g$ is analytic.
\end{oss}

The structure \textbf{(a')} is also not observed for liquids in stable equilibrium. The expectation should be that, considering again the touching point as the idealization of a thin connecting neck, the possibility of coalescence provides a virtual deformation that violates stable equilibrium. This may appear, at first sight, to be very similar to the situation discussed for \textbf{(a)}. Indeed the fact that a coalescence phenomenon should be energy convenient (when we have a tiny meniscus) is sometimes attributed, in the literature, to the large amount of curvature present in the meniscus (along the lines of the explanation given when we ruled out (\textbf{a})). However, upon close inspection, 
that turns out not to completely accurate. It is not immediately clear why the curvature should play a role, since $\mathcal{E}_{\Om}$ selects the mean curvature (and not the full curvature) as the key quantity that drives towards an equilibrium. The mean curvature of a tiny neck, obtained by a smoothing of two touching spherical caps, may attain any value, in particular the neck could be part of a Delaunay surface, which is stationary for $g\equiv 0$ (in stark contrast with \textbf{(a)}, where smoothing leads to very high mean curvature)\footnote{For further comments on bounds on the full curvature, see Section \ref{droplets}.}. 

Our next task is then to understand which assumption can rule out \textbf{(a')}. It turns out that, allowing virtual volume-preserving deformations ``coalescence'' and ``break up'' does indeed permit to rule out \textbf{(a')}; unlike for \textbf{(a)} and \textbf{(b)}, however, it will be the stability assumption, rather than stationarity, that will do so (stationarity alone would not suffice). In fact we will prove the following proposition in Section \ref{finalproof} (the formal notions of ``coalescence'' and ``break-up'' deformations are given in Definitions \ref{Dfi:coalescence_a}, \ref{Dfi:coalescence_a'}, \ref{Dfi:coalescence_b} and the notions of stationarity and stability for these deformations are given in Definition \ref{Dfi:station_stable_one_sided} below):

\begin{Prop}
\label{Prop:a'}
Let $U$ and $E$ be as in the definition of \textbf{(a')}. 
Assume that $E$ is stationary in $U$ for ambient volume-preserving deformations, with respect to $\mathcal{E}_U$. Then $E$ is not stable in $U$ with respect to $\mathcal{E}_U$ for volume-preserving deformations of ``coalescence'' and ``break up'' type.
\end{Prop}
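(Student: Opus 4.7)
The plan is to exhibit an admissible volume-preserving coalescence (or break-up) family $\{E_t\}_{t\in[0,\delta)}$ with $E_0=E$ along which the first variation of $\mathcal{E}_U$ vanishes but the second variation is strictly negative, violating stability in the one-sided sense of Definition \ref{Dfi:station_stable_one_sided}. Going all the way to second order is essential here: as the paper highlights, a smoothing of a tangential contact can have bounded mean curvature (e.g.\ a Delaunay-type neck), so unlike in cases \textbf{(a)} and \textbf{(b)} the first variation can be arranged to vanish, and only the second variation distinguishes \textbf{(a')} from a stable configuration.

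Setup. Place $p$ at the origin with common tangent plane $\{x_3=0\}$ and write $D_1,D_2$ as graphs of smooth functions $u_i$ with $u_i(0)=0=\nabla u_i(0)$. The ambient stationarity hypothesis gives (by Section \ref{ambient}) that each $D_i$ has mean curvature $g-\lambda_i$ in the outer normal direction for some constant $\lambda_i$. If $\lambda_1\neq\lambda_2$ --- possible only when the two disks belong to distinct connected components of $\overline{E}\cap U$ meeting at $p$ --- a straightforward volume-transferring coalescence across $p$ already has strictly negative first variation, contradicting stationarity; so we may assume $\lambda_1=\lambda_2=:\lambda$. Define $E_t$ by excising the portion of $D_1\cup D_2$ inside $B_{c\sqrt{t}}(p)$ and gluing in a piece of an unduloid/nodoid of mean curvature $\lambda-g(0)$ with neck scale of order $t$ (coalescence), or two separated spherical caps (break-up), then restore the volume by a small ambient deformation supported in an annular region where $D_1,D_2$ are already stationary.

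Variations and the main obstacle. Because the inserted neck is calibrated to have mean curvature $\lambda-g(0)$ and the ambient volume correction acts where $D_1,D_2$ are stationary, a direct computation yields $\frac{d}{dt}\big|_{t=0^+}\mathcal{E}_U(E_t)=0$. The $t^2$-coefficient decomposes as (i) the area ``saved'' by excising the cuspidal regions of $D_1\cup D_2$, (ii) the area added by the inserted Delaunay-type neck, and (iii) a contribution from the volume-correcting ambient deformation and the potential term; the saved area in (i) is controlled from below by the positive semidefinite Hessian difference $D^2u_2(0)-D^2u_1(0)$, which is non-trivial by the definition of \textbf{(a')}, while (ii) scales more slowly and (iii) is subdominant. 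The main difficulty, and the reason the paper describes the computation as ``non-standard'', is that $\{E_t\}$ is not smooth through $t=0$ as a family of Caccioppoli sets, so the usual ambient second-variation formula is unavailable; one must instead perform a careful scaling expansion of the Delaunay family near its singular limit and combine it with the Hessian information at $p$ to extract the strictly negative leading coefficient, which yields the contradiction.
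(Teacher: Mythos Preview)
Your strategy differs substantially from the paper's, and as written it has real gaps.

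\medskip

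\textbf{What the paper actually does.} The paper does not glue in a Delaunay piece at all. Instead it uses a completely explicit deformation: the flow $\Phi_t$ of the radial field $\chi\,\partial_r$ (with $\chi$ a cutoff at scale $R_0$) pushes $\operatorname{graph}(u)\setminus\{0\}$ and $\operatorname{graph}(v)\setminus\{0\}$ outward, so that for each $t$ the two deformed surfaces $M_t,N_t$ are surfaces-with-boundary sharing the circle $\{r=t,\,z=0\}$. A separate ambient deformation in an outer annulus restores volume. The first variation vanishes because the interior (mean-curvature) contributions cancel against the potential/Lagrange-multiplier terms and the boundary conormal term $\int_{\partial M_t}\vec n\cdot(\chi\partial_r)=-2\pi t$ vanishes at $t=0$. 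The second variation is then computed by differentiating the first-variation formula in $t$: the dominant term is the $t$-derivative of the boundary contributions, giving $-2\pi$ from $M_t$ and $-2\pi$ from $N_t$, hence $-4\pi$. All remaining terms are shown to be $o(1)$ as $R_0\to 0$ (using that $|D\partial_r|\le C/r$ is integrable, that area at scale $R_0$ is $O(R_0^2)$, and that the volume-correction data $\zeta$, $s''(0)$, $\mathbb V''(0)$ all tend to $0$). No Hessian information about $u,v$ is used; the $-4\pi$ is purely the rate of growth of the length of the opening circle.

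\medskip

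\textbf{Where your argument has gaps.} First, your family $\{E_t\}$ is not shown to satisfy Definition~\ref{Dfi:coalescence_a'}: that definition requires $\Psi_C$ to be $C^1$ in $(t,x)$, $C^2$ in $t$, and to have \emph{bounded} initial speed. A Delaunay neck of ``scale $t$'' glued into a hole of radius $c\sqrt t$ degenerates at $t=0$, and you give no argument that the resulting map has the required joint regularity or a bounded $\partial_t|_{t=0^+}\psi^t$. Second, your second-variation comparison is asserted, not computed: you claim the excised area (i) dominates the inserted-neck area (ii), but for a CMC replacement there is no a priori reason the difference has a definite sign at order $t^2$, and you specify neither the neck parameters nor the scaling exponents. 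Third, the statement that $D^2u_2(0)-D^2u_1(0)$ is ``non-trivial by the definition of \textbf{(a')}'' is not correct as stated: the definition alone allows higher-order contact; what forces a nonzero trace is the mean-curvature equation together with $\lambda>g(0)$ (which the paper obtains from the maximum principle), and you do not invoke this. Finally, the side case $\lambda_1\neq\lambda_2$ does not arise: in \textbf{(a')} one checks that $\overline{E}\cap U$ is connected, so a single $\lambda$ governs both disks.

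In short, your outline might be salvageable, but it would require substantial work to construct a deformation meeting the regularity and bounded-speed requirements and to carry out the scaling expansion honestly; the paper's radial-flow argument sidesteps all of this by producing the negative term $-4\pi$ from an elementary boundary-length computation.
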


Applying the Proposition locally around every point of non-embeddedness in the conclusion of Theorem \ref{1} we will complete the proof of the embeddedness of $\p E \cap \Om$, and thus of the main theorem (Theorem \ref{thm:main}).

\subsection{Stationarity and stability for ``coalescence'' and ``break-up'' virtual deformations: formal definitions and statement of main result}

Having discussed the need for the ``coalescence'' and ``break-up'' virtual deformations and their role in achieving the embeddedness conclusion, we will now formally define these deformations. As we found out, we only need to define them for very specific starting configurations, namely \textbf{(a)}, \textbf{(a')}, \textbf{(b)}. We also note that the deformation of $\p^* E$ is actually the same for ``coalescence'' and ``break-up'': we speak of coalescence if distinct connected components of the interior of $E$ merge and we speak of break-up if a connected component of the closure of $E$ gets split. Passing to the complement of $E$ we turn ``coalescence'' into ``break-up'' and vice-versa (see Figures \ref{fig:touching_sing_deform}, \ref{fig:class_sing_deform}).

\medskip

With reference to the structures \textbf{(a)}, \textbf{(a')}, \textbf{(b)} described in Section \ref{non_ambient}, the definition of ``coalescence'' or ``break-up'' deformations will be given after choosing a convenient system of coordinates and a sufficiently small neighbourhood.

\medskip

\noindent \textbf{(a)} Upon choosing coordinates suitably and making $U$ smaller if needed, we assume that $0\in T$, $D_1$ and $D_2$ are graphs over the plane spanned by the first two coordinates and this plane is tangent to $D_1$ and $D_2$ at $0$, $D_1$ is below $D_2$, $T$ is a connected curve and it is a graph over the second coordinate axis. Let $(\gamma_1(s), s, \gamma_3(s))$ be a smooth injective parametrization of $T$. Denote the four connected components of $\p E \cap U \setminus T$ by $A_1, A_2, A_3, A_4$, where $A_1, A_2$ are contained in the top embedded disk and $A_3, A_4$ in the bottom embedded disk, with $A_1, A_3$ defined by the condition $y\in A_1 \cup A_3 \Rightarrow y_1<\gamma_1(y_2)$ (i.e.~$A_1, A_3$ are ``on the left of $T$'') and $A_2, A_4$ defined by the condition $y\in A_2 \cup A_4 \Rightarrow y_1>\gamma_1(y_2)$ (i.e.~$A_1, A_3$ are ``on the right of $T$''). Then $A_1 \cup T \cup A_3$ and $A_2 \cup T \cup A_4$ are topologically disks; both fail to be smoothly embedded or immersed exactly at points in $T$ ($T$ is a cusp). 
We then consider two disks $D_L$ and $D_R$ and a $C^1$ proper map $\psi_{LR}:D_L \cup D_R \to U$ such that its restriction to $D_L$ and $D_R$ has image respectively equal to $A_1 \cup T \cup A_3$ and $A_2 \cup T \cup A_4$; moreover, $\psi_{LR}$ is a smooth embedding away from the inverse image of $T$ (where the differential becomes degenerate --- note that $\psi_{LR}^{-1}(T)$ disconnects both $D_L$ and $D_R$ and these are the only points of non-injectivity).

\begin{Dfi}[coalescence/break up from \textbf{(a)}; degenerate immersions of two disks]
\label{Dfi:coalescence_a}
We say that $\Psi_{LR}(t,x):[0,\eps) \times (D_L \cup D_R) \to U$ (with $\eps>0$) is a ``coalescence'' or ``break-up'' deformation in $U$ if it is $C^1$ as a function of $(t,x)$, it is $C^2$ as a function of $t$, $\Psi_{LR}(0,\cdot)=\psi_{LR}(\cdot)$, and if there exists an open non-empty ball $B \subset \subset U$ that contains $0$ and such that, writing $B_{LR}=\psi_{LR}^{-1}(B)$, and writing $\psi_{LR}^t: D_L \cup D_R \to U$ for the map defined by $\psi_{LR}^t(\cdot) = \Psi_{LR}(t,\cdot)$, we have:
for all $t\in [0,\eps)$, the map $\psi_{LR}^t$ is an embedding when restricted to $B_{LR} \setminus \psi_{LR}^{-1}(T)$; for all $t\in [0,\eps)$, $\psi_{LR}^t= \psi_{LR}$ in $(D_L \cup D_R)\setminus B_{LR}$.
\end{Dfi}

\noindent \textbf{(a')} Upon choosing coordinates suitably and making $U$ smaller if needed, we assume that $\{0\}=D_1 \cap D_2$, $D_1$ and $D_2$ are graphs over the plane spanned by the first two coordinates and this plane is tangent to $D_1$ and $D_2$ at $0$. We describe the initial configuration of $\p E \cap U$ as the image of a $C^1$ proper map $\psi_C: S^{1} \times (-1,1) \to U$, with $\psi_C$ an embedding away from $S^{1} \times \{0\}$ and with $S^1 \times \{0\}$ mapped to the touching point of the two embedded disks, that we assume to be $0\in\R^3$.\footnote{E.g.~$(e^{i \theta}, z)\in S^1 \times (-1,1) \to (z^2 \cos \theta, z^2 \sin \theta, \text{sgn}(z) z^4)\in\R^3$ covers two touching paraboloids.} The differential of $\psi_C$ is degenerate (and $\psi_C$ becomes non-injective) on $S^{1} \times \{0\}$.

\begin{Dfi}[coalescence/break up from \textbf{(a')}; degenerate immersion of a cylinder]
\label{Dfi:coalescence_a'}
We say that $\Psi_{C}(t,x):[0,\eps) \times (S^{1} \times (-1,1)) \to U$ (with $\eps>0$) is a ``coalescence'' or ``break-up'' deformation in $U$ if it is $C^1$ as a function of $(t,x)$, it is $C^2$ as a function of $t$, $\Psi_{C}(0,\cdot)=\psi_{C}(\cdot)$ and if there exists an open non-empty ball $B \subset \subset U$ that contains $0$ and such that, writing $B_{C}=\psi_{C}^{-1}(B)$, and writing 
$\psi^t_{C}:S^{1} \times (-1,1) \to U$ for the map defined by $\psi^t_{C}(\cdot)=\Psi_C(0,\cdot)$, we have: for every $t \in [0,\eps)$, $\psi^t_{C}$ restricted to $S^1\times ((-1,0) \cup (0,1))$ is an embedding; for all $t\in[0,\eps)$, $\psi_{C}^t= \psi_{C}$ in $(S^{1} \times (-1,1))\setminus B_{C}$.
\end{Dfi}

\noindent \textbf{(b)} We set coordinates so that $0\in T$ and choose $U$ sufficiently small so that $T$ is connected. We assume that the indexation of $L_j$ is such that $L_{j}$ and $L_{j+1}$ are adjacent. We describe $\p E \cap U = \cup_{j=1}^{2N} L_j$ as the image of a $C^{1}$ proper map $\psi_{cl}: D_1 \cup \ldots \cup D_N \to U$, where $D_j$ are disks and $\psi_{cl}$ is an embedding away from the inverse image of $T$ (which disconnects each disk $D_j$), $\psi_{cl}(D_i) = L_j \cup L_{j+1}$ for some $j$ (i.e.~each disk covers two adjacent hypersurfaces-with-boundary) and the differential of $\psi_{cl}$ is non-injective exactly on the inverse image of $T$.

\begin{Dfi}[coalescence/break up from \textbf{(b)}; degenerate immersion of $N$ disks.]
\label{Dfi:coalescence_b}
We say that $\Psi_{cl}(t,x):[0,\eps) \times (D_1 \cup \ldots \cup D_N) \to U$ (with $\eps>0$) is a ``coalescence'' or ``break-up'' deformation in $U$ if it is $C^1$ as a function of $(t,x)$, it is $C^2$ as a function of $t$, $\Psi_{C}(0,\cdot)=\psi_{C}(\cdot)$ and if there exists an open non-empty ball $B \subset \subset U$ that contains $0$ and such that, writing $B_{cl}=\psi_{cl}^{-1}(B)$, and writing 
$\psi^t_{cl}:D_1 \cup \ldots \cup D_N \to U$ for the map defined by $\psi^t_{cl}(\cdot)=\Psi(t,\cdot)$, we have: for every $t\in[0,\eps)$, $\psi^t_{cl}$ is a smooth embedding when restricted to $B_{LR}\setminus \psi_{cl}^{-1}(T)$; for all $t\in[0,\eps)$ we have $\psi_{cl}^t= \psi_{cl}$ in $(D_1 \cup \cdots \cup D_N)\setminus B_{LR}$.
\end{Dfi}

\textit{Initial speed}. Denoting by $\psi^t$ the deformation in either of the three cases, the initial velocity $\vec{v}:=\left.\frac{\p}{\p t}\right|_{t=0^+} \psi^t$ is a $C^1_c$ vector field on the domain of $\psi$ (i.e.~a $C^1_c$ and $\R^3$-valued function on the domain of $\psi^0$). In particular, we require, in all three cases, $\vec{v}$ to be bounded (in other words, we require the virtual deformations to happen at bounded speed, in accordance with a concrete deformation of a liquid.) We note that $\vec{v}$ fails to be identifiable with an ambient vector field on $\p E \cap U$ exactly at $T$, in all three cases. 

\medskip

\textit{Volume-preserving constraint for coalescence and break up virtual deformations}. Note that in all three cases above, the image of $\psi^t$ is the (topological) boundary of a Caccioppoli set $E_t$ in $U$, for all $t$. In fact, choosing two alternative orientations, it is the boundary of two Caccioppoli sets, one being the complement of the other. In case \textbf{(a)}, for example, the interior of $E=E_0$ could be either the portion of $U$ ``between'' the two embedded disks (that touch each other tangentially at $T$), or the complement of this region. We do not need to distinguish the two possibilities, we merely need to define $E_t$ coherently with $E_0=E$, that is ensuring that $\chi_{E_t}$ is continuous in $L^1(U)$. Depending on which set is the interior of $E$, the same deformation $\psi^t$ of $\p E$ in $U$ could be ``coalescence'' or ``break-up'' (see e.g.~Figure \ref{fig:touching_sing_deform}). We will say that the deformation $\psi^t$ is volume-preserving if $|E_t \cap U|$ is constant in $t$; note that this does not depend on which set is $E$ and which is $U\setminus E$, since preserving the volume of $E \cap U$ is equivalent to preserving the volume of $U\setminus E$.

\begin{Dfi}
\label{Dfi:station_stable_one_sided}
 Let $E$ be a Caccioppoli set in $\R^3$ with $\p E \cap U$ given by one of the three structures \textbf{(a)}, \textbf{(a')}, \textbf{(b)}. Stationarity of $E$ in $U$ for volume-preserving (one-sided) ``coalescence'' or ``break-up'' deformations means that for any $\psi^t$ as in $\psi_{LR}^t$ of Definitions \ref{Dfi:coalescence_a}, or as in $\psi_C^t$ of \ref{Dfi:coalescence_a'}, or as in $\psi_{cl}^t$ of \ref{Dfi:coalescence_b}, and with $\psi_t$ volume-preserving in $U$ (and with bounded initial speed), the energy $\mathcal{E}_U$ does not decrease to first order, i.e.
 $$\left.\frac{d}{dt}\right|_{t=0^+} \mathcal{E}_U(E_t) \geq 0.$$
 Similarly, stability of $E$ in $U$ for ``coalescence'' or ``break-up'' volume-preserving deformation means that (for the same class of $\psi^t$) the second variation (at $t=0^+$) is non-negative if the first variation (at $t=0^+$) vanishes, that is: for any $\psi^t$ as in Definitions \ref{Dfi:coalescence_a}, \ref{Dfi:coalescence_a'}, \ref{Dfi:coalescence_b} that is volume-preserving and has bounded initial speed we have
 $$\left.\frac{d}{dt}\right|_{t=0^+} \mathcal{E}_U(E_t) = 0 \Rightarrow \left.\frac{d^2}{dt^2}\right|_{t=0^+} \mathcal{E}_U(E_t) \geq 0 .$$
\end{Dfi}

\begin{oss}
 The stationarity condition in Definition \ref{Dfi:station_stable_one_sided} is an inequality because the deformations are one-sided ($t\in [0,\eps)$) and therefore the natural stationarity condition requires that the deformation ``does not decrease the energy to first order''. If the (one-sided) first variation is strictly positive, then the non-negativity of the second variation would be an unnatural requirement, hence the formulation of stability given. 
\end{oss}

We can now summarise and define precisely the hypotheses that we make to describe a condition of stable equilibrium for a liquid that occupies the region $E$.

\begin{Dfi}
\label{Dfi:station_stable_full}
Let $E$ be a Caccioppoli set in $\R^3$ and let $\Om \subset \R^3$ be an open set. We say that $E$ is stationary and stable in $\Om$ with respect to the energy $\mathcal{E}_{\Om}$ if the following two conditions hold:

\noindent (i) For every admissible (as in Definition \ref{Dfi:vol-pres_Leb}) volume-preserving ambient deformations $\psi_t$, with initial speed compactly supported in $\Om$, (\ref{eq:stationarity}) and (\ref{eq:stability}) hold.

\noindent (ii) Let $U\subset \subset \Om$ be such that $\p E \cap U$ is given by one of the three structures \textbf{(a)}, \textbf{(a')}, \textbf{(b)}. Then the stationarity and stability inequalities in Definition \ref{Dfi:station_stable_one_sided} hold for any choice of ``coalescence'' or ``break-up'' volume-preserving deformation in $U$.
\end{Dfi}

We can now state our main result in precise form.

\begin{thm}
 \label{thm:main}
Let $\Om$ be an open set in $\R^{3}$, $g:\Om\to \R$ analytic and let $E$ be a set of locally finite perimeter (Caccioppoli set) in $\R^3$. Assume that $E$ is stationary and stable in $\Om$ with respect to $\mathcal{E}_{\Om}$ in the sense of Definition \ref{Dfi:station_stable_full}. Then $\overline{\p^* E} \cap \Om = \p^* E \cap \Om$ is a smoothly embedded (analytic) surface. (Moveover, the mean curvature vector of $\p E \cap U$ is given by $(g-\lambda)\nu_E$ for some $\lambda \in \R$ that possibly depends on the connected component of $E$, with $\nu_E$ denoting the unit outer normal to $E$).
\end{thm}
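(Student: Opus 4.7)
The theorem is essentially a combination of the two main regularity inputs already set up in the preceding sections, applied to each connected component of $\overline{E}\cap \Omega$ (where $E$ is taken to be its Lebesgue representative). My plan is to reduce to Theorem \ref{1} and then eliminate the only remaining non-embedded structure \textbf{(a')} via Proposition \ref{Prop:a'}.

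First, I would verify that the three hypotheses of Theorem \ref{1} are satisfied. The $L^\infty$ bound on the first variation of $|\partial^* E|\res \Omega$ follows from Definition \ref{Dfi:station_stable_full}(i) via the Lagrange multiplier reformulation of the ambient stationarity condition (to be established in Section \ref{A}): on any $U\subset\subset\Omega$ with $\overline{E}\cap U$ connected, there exists $\lambda\in\R$ such that $\partial^* E$ has generalized mean curvature $g-\lambda$ in $U$, which is bounded on compact subsets. The third hypothesis (stationarity/stability on regular parts) is an immediate consequence of Definition \ref{Dfi:station_stable_full}(i) restricted to such regular open sets.

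The second hypothesis — that no open set $U\subset\subset\Omega$ exhibits local structure \textbf{(a)} or \textbf{(b)} — is where Definition \ref{Dfi:station_stable_full}(ii) enters. For each of these structures, the heuristic described in Section \ref{non_ambient} (to be made rigorous in Section \ref{C}) produces a specific coalescence/break-up deformation $\psi^t$ along the cusp curve $T$ whose first variation of $\mathcal{E}_U$ is strictly negative, contradicting the stationarity inequality $\frac{d}{dt}|_{t=0^+}\mathcal{E}_U(E_t)\geq 0$ imposed by Definition \ref{Dfi:station_stable_full}(ii). The key input is that at a cusp of type \textbf{(a)} or at a wedge junction of type \textbf{(b)}, one principal curvature of the limiting immersion diverges and the associated mean-curvature vector points into the cuspidal/wedge region, whereas the potential contribution $g$ remains bounded; thickening the neck through $T$ in a volume-preserving way therefore decreases $\mathcal{E}_U$ to first order.

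Having verified the hypotheses, Theorem \ref{1} gives that $\overline{\partial^* E}\cap \Omega$ is smoothly embedded (and analytic, using $g$ analytic and elliptic regularity) away from a set of isolated points at which the local structure is \textbf{(a')}. The final step is to exclude such points. If $p$ were such a point, then on a suitable neighbourhood $U_p$ the configuration satisfies the hypotheses of Proposition \ref{Prop:a'}: $E$ is stationary in $U_p$ for ambient volume-preserving deformations by Definition \ref{Dfi:station_stable_full}(i). The Proposition then asserts that $E$ cannot be stable in $U_p$ for coalescence/break-up deformations — but Definition \ref{Dfi:station_stable_full}(ii) applied to the \textbf{(a')} structure in $U_p$ demands exactly that stability. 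This contradiction forces the set of exceptional points to be empty, giving $\overline{\partial^* E}\cap\Omega = \partial^* E\cap\Omega$ smoothly embedded and analytic. The mean-curvature identity $H = (g-\lambda)\nu_E$ on each connected component then follows from the usual first-variation calculation (carried out in Section \ref{A}) now applied on a genuine smooth surface.

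The substantive work, entirely concentrated in earlier/forthcoming sections, is (a) the first-variation computation showing that coalescence/break-up at \textbf{(a)} and \textbf{(b)} strictly decreases $\mathcal{E}_U$, and (b) Proposition \ref{Prop:a'}, in which the second-variation analysis of a coalescence deformation at a tangential touching point must be carried out delicately — as the paper emphasizes, the mean curvature alone does not suffice (a Delaunay-type neck is stationary), so the proof of \ref{Prop:a'} must exploit the full second-order information, and this is the genuine obstacle. Given \ref{1} and \ref{Prop:a'}, however, the theorem itself is a short deduction along the lines above.
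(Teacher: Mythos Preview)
Your proposal is correct and follows essentially the same route as the paper: verify the hypotheses of Theorem \ref{1} (the $L^\infty$ first-variation bound via Section \ref{A}, the exclusion of \textbf{(a)} and \textbf{(b)} via the first-variation arguments of Section \ref{C}, and the stationarity/stability on regular parts from Definition \ref{Dfi:station_stable_full}(i)), apply Theorem \ref{1} to reduce to isolated \textbf{(a')} points, and then invoke Proposition \ref{Prop:a'} together with Definition \ref{Dfi:station_stable_full}(ii) to eliminate those. Your identification of where the real work sits (Sections \ref{C} and \ref{finalproof}) and of the subtlety in Proposition \ref{Prop:a'} (stationarity alone being insufficient because of Delaunay-type necks) matches the paper exactly.
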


\noindent \textit{Summary of the proof of Theorem \ref{thm:main}.} The proof of Theorem \ref{thm:main} (informally outlined in Sections \ref{ambient} and \ref{non_ambient}) will be given in the next three sections. In Section \ref{A} we verify the claim that the first variation of $|\p^* E|\res \Om$ is in $L^\infty$. In Section \ref{C} we show that structures \textbf{(a)} and \textbf{(b)} are prevented by the stationarity assumption in (ii) of Definition \ref{Dfi:station_stable_full}. These two facts, together with condition (i) of Definition \ref{Dfi:station_stable_full}, verify the validity of the hypotheses of Theorem \ref{1}, whose conclusion reduces the proof of Theorem \ref{thm:main} to the analysis of structure \textbf{(a')}. Structure \textbf{(a')} will be ruled out in Section \ref{finalproof} thanks to the stability assumption in (ii) of Definition \ref{Dfi:station_stable_full}. 

\begin{oss}[\textit{curvature bounds}]
In Section \ref{droplets} we will add, to the conclusion of Theorem \ref{thm:main}, some remarks on the boundedness properties of $|A_{\p E}|$.
\end{oss}

\begin{oss}[\textit{the case of minimisers}]
 \label{oss:minimisers}
The case of energy-minimisers has a long and fruitful history. The local minimising conditions for $E$ in an open set $U$ entails a comparison with Caccioppoli sets that coincide with $E$ in the complement of $U$ and are $L^1$-close to $E$. Such a condition easily implies the assumptions made in Theorem \ref{thm:main}, hence Theorem \ref{thm:main} recovers the regularity conclusions known for minimisers (see e.g.~\cite{GonzMassTaman}, \cite{GonzMassTaman2}, \cite{Maggi}). We point out that, under a minimising assumption, density estimates analogous to those in \cite[Theorem 16.14]{Maggi} play a key role in the regularity theory. The proof of these density estimates, however, makes use of the comparison argument with a ``competitor'' Caccioppoli set that is $L^1$-close to $E$. The competitor is built by (roughly speaking) cutting off the part of $E$ contained in a small ambient ball (and balancing the volume constraint somewhere else). This operation does not give rise to a deformation that can be related to a concrete liquid movement, since $\p^* E$ is of unknown structure in said ball. The spirit in the choices of variational assuptions in Theorem \ref{thm:main} is to have virtual deformations that can be thought of as concretely replicable.
\end{oss}

\begin{oss}
The variational assumptions of Theorem \ref{thm:main} can be weakened as explained in Appendix \ref{B}.
\end{oss}

\section{Volume-constraint as Lagrange multiplier}
\label{A}

We prove the claim in Section \ref{ambient}, i.e.~that (\ref{eq:stationarity}) leads to the $L^\infty$ condition on the first variation and to the fact that the generalised mean curvature is $(g-\lambda)\nu$.

Let $E$ be a Caccioppoli set in $\R^{n+1}$ and $U$ be an open set. Recall that (see e.g.~\cite[Theorem 17.5 and Proposition 17.8]{Maggi}), for an ambient deformation $\psi_t$ in $U$ with initial velocity $X \in C^1_c(U;\R^{n+1})$, the first variation of perimeter in $U$ is given by $\int_{\p^* E} \text{div}_{\p^* E} X$ and the first variation of $\int_E g$ is given by $\int_{\p^* E} g X \cdot \nu$, where $\nu$ is the outer unit normal to $\p^* E$. In particular, the first variation of $\int_E 1 = |E|$ is given by $\int_{\p^* E} X \cdot \nu$. These first variations are thus independent of the particular deformation, they only depend on the initial velocity. We have the following results.

\begin{Prop}
\label{Prop:Lagr_mult}
Let $E$ be a Caccioppoli set in $\R^{n+1}$ and $U\subset \R^{n+1}$ an open set. The following are equivalent:

\noindent $\bullet$ $E$ is stationary in $U$ with respect to the functional $\mathcal{E}_U(\cdot)$ for volume-constrained ambient deformations in $U$;

\noindent $\bullet$ there exists $\lambda \in \R$ such that $E$ is stationary in $U$ with respect to the functional $\mathcal{E}_U(\cdot) - \lambda |\cdot \cap U|$ for \textit{arbitrary} ambient deformations in $U$. 
\end{Prop}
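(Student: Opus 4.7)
The plan is a Lagrange-multiplier argument carried out at the level of initial velocities. For direction ($\Leftarrow$), note that for any volume-preserving ambient deformation $\psi_t$ one has $\frac{d}{dt}\big|_{t=0}|\psi_t(E)\cap U|=0$ by definition, so the $\lambda|\cdot\cap U|$ term contributes nothing to the first variation and stationarity of $\mathcal{E}_U-\lambda|\cdot\cap U|$ for arbitrary ambient deformations descends to stationarity of $\mathcal{E}_U$ for the volume-preserving subclass.

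For the nontrivial direction ($\Rightarrow$), I would use the fact recalled in the paragraph preceding the Proposition: the first variations of perimeter, of $\int_{E\cap U} g$, and of $|E\cap U|$ along an ambient deformation depend only on the initial velocity $X=\frac{\p}{\p t}\big|_{t=0}\psi_t\in C^1_c(U;\R^{n+1})$, and hence define linear functionals $\delta\mathcal{E}_U(X)$ and $\delta V(X)$ on $C^1_c(U;\R^{n+1})$. The key step is then the lemma: every $X\in C^1_c(U;\R^{n+1})$ with $\delta V(X)=0$ is the initial velocity of some volume-preserving ambient deformation in $U$. Assuming $\delta V\not\equiv 0$ (otherwise $\mathcal{H}^n(\p^*E\cap U)=0$ and the claim is vacuous with $\lambda=0$), fix $X_0\in C^1_c(U;\R^{n+1})$ with $\delta V(X_0)\neq 0$ and, given $X$ with $\delta V(X)=0$, consider the two-parameter family $\psi_{t,s}(x):=\phi_s^{X_0}\!\circ\phi_t^X(x)$ built from the flows of $X$ and $X_0$. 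Apply the implicit function theorem to $F(t,s):=|\psi_{t,s}(E)\cap U|$ at $(0,0)$: since $\p_s F(0,0)=\delta V(X_0)\neq 0$, one produces a $C^1$ function $s(t)$ with $s(0)=0$ and $F(t,s(t))\equiv|E\cap U|$, and differentiating yields $s'(0)=-\delta V(X)/\delta V(X_0)=0$, so $\eta_t:=\psi_{t,s(t)}$ is a volume-preserving ambient deformation in $U$ with initial velocity precisely $X$.

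Granted the lemma, the hypothesis gives $\ker\delta V\subset \ker\delta\mathcal{E}_U$, and a one-line linear-algebra argument (set $\lambda:=\delta\mathcal{E}_U(X_0)/\delta V(X_0)$ and subtract the appropriate multiple of $X_0$ from an arbitrary $X$) yields $\delta\mathcal{E}_U=\lambda\,\delta V$ as functionals on $C^1_c(U;\R^{n+1})$; rewritten, this is exactly the stationarity of $\mathcal{E}_U-\lambda|\cdot\cap U|$ for all ambient deformations. The main technical hurdle is the implicit-function-theorem construction of the volume-preserving family with prescribed initial velocity, which requires verifying enough regularity of $F$ (inherited from the $C^1$ dependence of flows on parameters). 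A secondary point worth flagging is the interplay with Definition~\ref{Dfi:vol-pres_Leb}: if $\overline{E^L}\cap U$ has several connected components, the admissible class imposes a separate volume constraint $\delta V_i$ per component, and the same argument then produces one Lagrange multiplier $\lambda_i$ per component, in line with the parenthetical qualification made in Section~\ref{ambient} (the statement of the Proposition being read componentwise).
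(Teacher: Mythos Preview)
Your proposal is correct and follows essentially the same approach as the paper: both hinge on the implicit-function-theorem construction of a volume-preserving deformation with prescribed initial velocity $X$ (your lemma is the paper's Step~1), followed by the elementary linear-algebra fact that $\ker\delta V\subset\ker\delta\mathcal{E}_U$ forces $\delta\mathcal{E}_U=\lambda\,\delta V$ (the paper's Steps~2--4, phrased there via equality of ratios). The only cosmetic difference is the choice of two-parameter family: the paper uses the affine family $Id+tX+sY$ (which makes the $C^1$ regularity of $F$ immediate via explicit Jacobian expansion), whereas you compose flows $\phi_s^{X_0}\circ\phi_t^X$; both are standard and both work. Your remarks on the degenerate case $\delta V\equiv 0$ and on reading the statement componentwise are apt additions.
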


An analogous equivalence is proved in \cite[Theorem 17.20]{Maggi} (in the case $g\equiv 0$, however the argument extends to arbitrary $g$) for the more restrictive class of volume-constrained minimizers ``among Caccioppoli sets'', or (with the same proof) for volume-constrained local minimizers, i.e.~for Caccioppoli sets $E$ that minimize the perimeter among Caccioppoli sets of the same volume that are close to $E$ in the $L^1$-topology. Since we are interested in stationarity for ambient deformations (which are more restrictive than deformations with respect to the $L^1$-topology), we adapt the arguments of \cite{Maggi} in the proof below, adapting in step 1 also an argument from \cite[Lemma 2.4]{BarbDoCarmo}.

\begin{proof}
\textit{Step 1}. Let $X \in C^1_c(U;\R^{n+1})$ be such that $\int_{\p^* E} X \cdot \nu =0$. Then there exists an ambient deformation $\psi_t$ of $E$ that is volume-preserving in $U$ and has initial velocity $\left.\frac{d \psi_t}{dt}\right|_{t=0}  = X$. To see this, choose any $Y\in C^1_c(U;\R^{n+1})$ such that $\int_{\p^* E} Y \cdot \nu \neq 0$. Consider the $2$-parameter family of diffeomorphisms $Id +t X +s Y$, for $s, t \in (-\eps, \eps)$, with $\eps>0$ sufficiently small.
The function $(t,s) \to |(Id +t X +s Y)(E) \cap U|$ is of class $C^1((-\eps, \eps)\times (-\eps,\eps))$. Indeed, the value of the volume at $(t,s)$ is given by $\int_U \chi_E \circ (Id+tX +sY)^{-1}$, where $\chi_E$ is the characteristic function of $E$; changing variable and computing the Jacobian of the diffeomorphism $x\to x+tX(x)+sY(x)$, we can rewrite the volume at $(t,s)$ as $\int_U \chi_E (1+t \,\text{div}X +s\, \text{div}Y + O(s^2) + O(t^2))$, with $O(s^2), O(t^2)$ continuous in $x$ and smooth in $t,s$. 
This also implies that the partial derivative $\left.\frac{\p}{\p s}\right|_{(t,s)=(0,0)}|(Id +t X +s Y)(E) \cap U|$ is $\int_U \chi_E \,\text{div}Y = \int_{\p^* E} Y \cdot \nu \neq 0$ (by definition of $\p^* E$). By the implicit function theorem, the level set $\{(t,s):|(Id +t X +s Y)(E) \cap U| = |E\cap U|\}$ is of the form $\{(t, s(t)\}$ for some $C^1$ function $s(t)$ defined on a possibly smaller interval $t\in (-\eps_0, \eps_0)$; moreover, $s'(0)=\frac{-\int_{\p^* E} X \cdot \nu}{\int_{\p^* E} Y \cdot \nu} =0$. We thus obtain that $\psi_t=Id +tX +s(t) Y$ is a volume preserving ambient deformation in $U$ with initial velocity $X + s'(0) Y =X$. 

\medskip

\textit{Step 2}. Let us prove (in steps 2, 3, 4) that the first condition of the Proposition implies the second. Given $X$ as in step 1, the stationarity assumption on $E$ gives that the first variation of $\mathcal{E}_U$ is $0$ along the deformation $\psi_t$ exhibited in step 1. The expression of the first variation of $\mathcal{E}_U$ agrees with the one computed along any ambient deformation (not necessarily volume-preserving) with initial velocity $X$, as recalled before the statement of the Proposition. We have therefore proved that, for $X \in C^1_c(U;\R^{n+1})$ such that $\int_{\p^* E} X \cdot \nu =0$, the first stationarity condition in the Proposition implies that $\int_{\p^* E} \text{div}_{\p^* E} X+\int_{\p^* E} g X \cdot \nu=0$. 

Note that this identity is also obtained from the second stationarity condition in the proposition when employing a deformation with initial speed the given $X$, regardless of the choice of $\lambda$ (because the first variation of the term $\lambda |\cdot \cap U|$ is $\lambda \int_{\p^* E} X\cdot \nu$, which vanishes for the given $X$).

\medskip

\textit{Step 3}. We prove the following. Assume that $E$ satisfies the first stationarity condition in the proposition. Let $X,Y \in C^1_c(U;\R^{n+1})$ be such that $\int_{\p^* E} X \cdot \nu \neq 0$, $\int_{\p^* E} Y \cdot \nu \neq 0$. Then the ratios $\frac{\int_{\p^* E} \text{div}_{\p^* E} X + \int_{\p^* E} g X \cdot \nu}{\int_{\p^* E} X \cdot \nu}$ and $\frac{\int_{\p^* E} \text{div}_{\p^* E} Y+ \int_{\p^* E} g Y \cdot \nu}{\int_{\p^* E} Y \cdot \nu}$ are equal.

Define the vector field $T=X - \frac{\int_{\p^* E} X \cdot \nu}{\int_{\p^* E} Y \cdot \nu} Y$. Then $\int_{\p^* E} T \cdot \nu = 0$, and $T\in C^1_c(U;\R^{n+1})$. By step 2 we have that the first variation of $\mathcal{E}_U$ along any deformation with initial velocity $T$ is $0$, i.e.~$\int_{\p^* E} \text{div}_{\p^* E} T + \int_{\p^* E} g T \cdot \nu=0$. Substituting for $T$, the conclusion follows.

\medskip

\textit{Step 4}. We  have, by step 3, that there exists $\lambda \in \R$ such that $ \int_{\p^* E} \text{div}_{\p^* E} X + \int_{\p^* E} g X \cdot \nu - \lambda \int_{\p^* E}X \cdot \nu = 0$ for every $X$ such that $\int_{\p^* E} X \cdot \nu \neq 0$. The same identity holds for $X$ with  $\int_{\p^* E} X \cdot \nu = 0$ as well, as proved in step 2. This amounts to the stationarity of $E$ for any ambient deformations with respect to the functional $\mathcal{E}_U - \lambda |E \cap U|$. We have thus proved that the first stationarity condition in the proposition implies the second.

\medskip

\textit{Step 5}. The second stationarity condition in the proposition easily implies the first, upon noticing that for any volume preserving ambient deformation $\psi_t$, its initial velocity $\left.\frac{d}{dt}\right|_{t=0}\psi_t$ satisfies $\int_{\p^* E} \left(\left.\frac{d}{dt}\right|_{t=0}\psi_t\right) \cdot \nu =0$. The second stationarity condition reduces to the first, for such a vector field.
\end{proof}

As a consequence (see e.g.~\cite{Maggi} or \cite[Remark 2.19]{BW}) we get the following:

\begin{Prop}
\label{Prop:Lagr_mult_mean_curv}
For $\lambda \in \R$, let $E$ be stationary with respect to $\mathcal{E}_U(\cdot) - \lambda |\cdot \cap U|$ for arbitrary ambient deformations in $U$. Then the first variation  (in the varifold sense) of $|\p ^* E|\res U$ is in $L^\infty$ with respect to $\Hc^n \res (\p^*E\cap U)$, and its density, i.e.~the generalized mean curvature of $\p ^* E$ in $U$, is given by $(g-\lambda)\nu$, where $\nu$ is the measure-theoretic unit outer normal to $E$ in $U$.
\end{Prop}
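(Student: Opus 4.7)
\medskip

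\noindent\textit{Proof proposal.} The plan is to unpack the stationarity assumption directly using the three first-variation formulas recalled immediately before the statement, and to recognise the resulting identity as prescribing both the $L^\infty$ bound on the first variation of $|\p^*E|\res U$ and the value $(g-\lambda)\nu$ of the mean curvature. No tool beyond those formulas and (a soft version of) Riesz representation is needed.

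Concretely, I would fix an arbitrary $X\in C^1_c(U;\R^{n+1})$ and pick any ambient deformation $\psi_t$ with $\left.\frac{d}{dt}\right|_{t=0}\psi_t = X$ (e.g.\ $\psi_t = \mathrm{Id}+tX$ for small $t$, which is a diffeomorphism of $U$ since $X$ is compactly supported there). Writing out the stationarity of $\mathcal{E}_U(\cdot)-\lambda|\cdot\cap U|$ along $\psi_t$ at $t=0$ via those three formulas yields
\[
\int_{\p^*E} \mathrm{div}_{\p^*E} X \, d\Hc^n \;=\; -\int_{\p^*E} (g-\lambda)\, \nu \cdot X \, d\Hc^n,
\]
valid for every $X\in C^1_c(U;\R^{n+1})$, where $\nu$ is the measure-theoretic outer unit normal to $E$. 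By definition, the left-hand side is $\delta|\p^*E|\res U(X)$, the first variation of the varifold $|\p^*E|$ on $U$. Since $g$ is smooth on $\Om$, the function $g-\lambda$ is bounded on any relatively compact subdomain (and by exhaustion one may assume $U\subset\subset\Om$, so that $\|g-\lambda\|_{L^\infty(U)}<\infty$), hence the right-hand side is bounded in absolute value by $\|g-\lambda\|_{L^\infty(U)}\int_{\p^*E}|X|\, d\Hc^n$.

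This bound, combined with the explicit formula above, means that $\delta|\p^*E|\res U$ extends by density and Riesz representation to a vector-valued Radon measure on $U$ that is absolutely continuous with respect to $\Hc^n\res(\p^*E\cap U)$, with Radon--Nikodym density $-(g-\lambda)\nu$, which lies in $L^\infty\!\left(\Hc^n\res(\p^*E\cap U);\R^{n+1}\right)$. By the standard definition of generalised mean curvature of a varifold (via $\delta V(X)=-\int \vec H \cdot X\, d\|V\|$ whenever the first variation is representable by integration), this identifies $\vec H=(g-\lambda)\nu$ on $\p^*E\cap U$. There is no essential obstacle in this plan: once the three first-variation formulas are invoked, the argument is a direct substitution, and the $L^\infty$ claim is immediate from the boundedness of $g$ on relatively compact subdomains of $\Om$.
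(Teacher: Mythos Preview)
Your argument is correct and is precisely the standard derivation that the paper defers to references (the paper states this proposition as an immediate consequence of the three first-variation formulas, citing \cite{Maggi} and \cite[Remark 2.19]{BW} rather than writing out a proof). The only cosmetic remark is that, once the identity $\delta(|\p^*E|\res U)(X)=-\int_{\p^*E}(g-\lambda)\,\nu\cdot X\,d\Hc^n$ is in hand for all $X\in C^1_c$, invoking Riesz is superfluous: the representing measure is already exhibited explicitly, and one only needs the density of $C^1_c$ in $C^0_c$ together with the bound you wrote to conclude.
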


\section{Ruling out (a) and (b) by first variation}
\label{C}
In this section we will show that configurations \textbf{(a)} and \textbf{(b)} are ruled out by the stationarity assumption of Theorem \ref{thm:main}, specifically by the stationarity requirement in Definition \ref{Dfi:station_stable_one_sided}.

\medskip

In case \textbf{(a)}, choosing $U$ and the coordinate system as in Section \ref{assumptions}, we denote by $T$ the $1$-submanifold (connected curve) of coincidence and assume that $D_1, D_2$ are graphs of smooth functions $u_1\leq u_2$ over a common plane, that is tangent to both graphs at $0\in T$, and with $D_1\cap D_2=T$. The set $E$ is either (case 1) the union of the subgraph of $u_1$ and the supergraph of $u_2$ (as in the top left picture in Figure \ref{fig:touching_sing_deform}), or (case 2) it is the set of points that lies between the two graphs (as in the bottom left picture in Figure \ref{fig:touching_sing_deform}); in either case, $\overline{E}$ is connected in the neighbourhood $U$ that we are analysing and therefore (Proposition \ref{Prop:Lagr_mult}) stationary, for arbitrary ambient deformations, with respect to $\mathcal{E}_U(\cdot)-\lambda|\cdot \cap U|$.  We note that $E$ is stationary in $U$ with respect to $\text{Per}_U(\cdot) + \int_{\cdot \cap U} g -\lambda |\cdot \cap U|$ if and only if $\R^3 \setminus E$ is stationary in $U$ with respect to $\text{Per}_U(\cdot) - \int_{\cdot \cap U} g +\lambda |\cdot \cap U|$. Therefore, upon redefining $\lambda$ and $g$ by a change of sign, we can (and do) assume that we are in case 2.

\begin{figure}[h]
\centering
 \includegraphics[width=6cm]{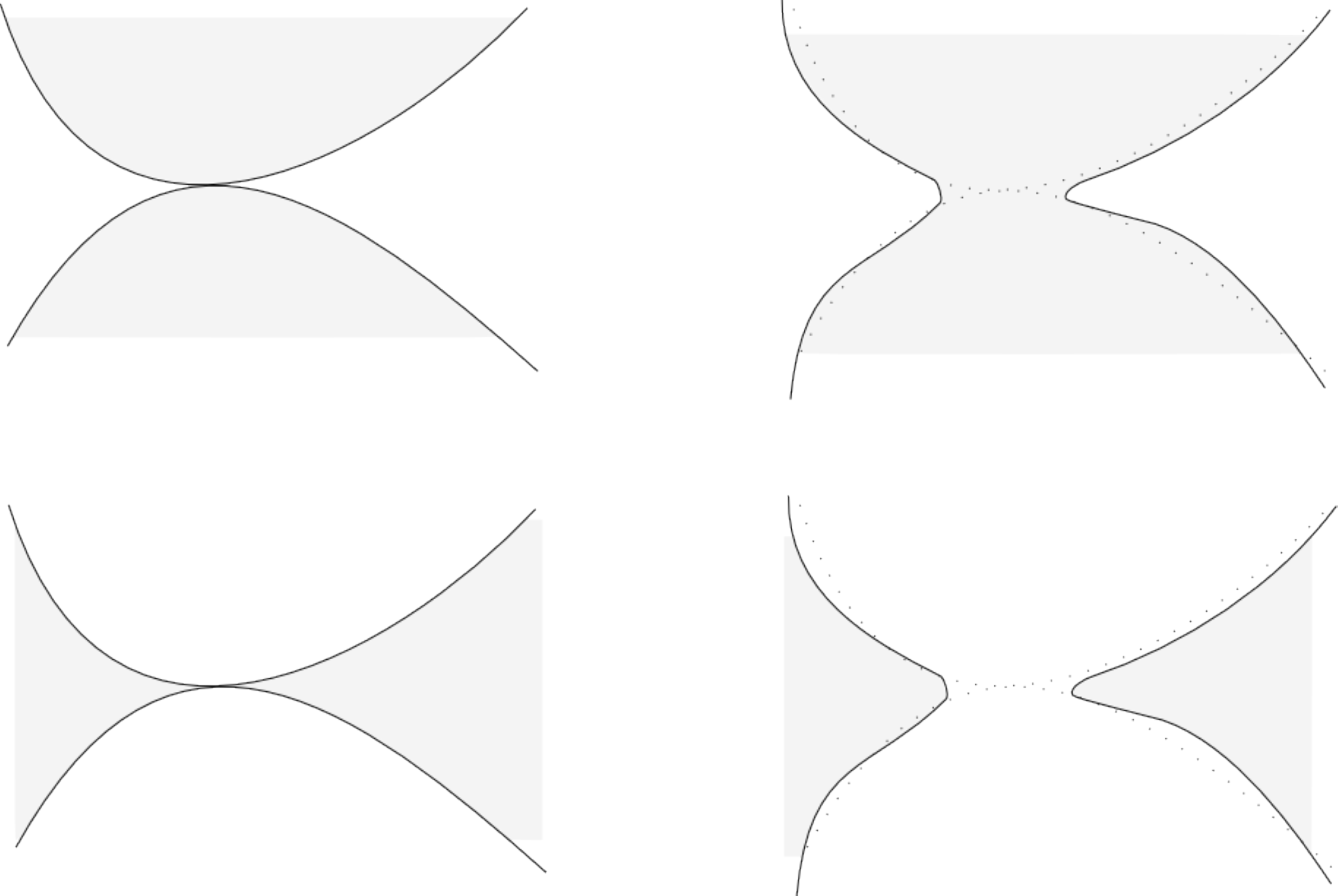} 
\caption{Top row: virtual deformation ``coalescence''. Bottom row: virtual deformation ``breakup''. The boundary is deformed in the same way in both cases. The picture depicts cross sections with starting configuration \textbf{(a)}, in which case one should imagine cylinders over the given figures; alternatively, the same cross sections could be obtained with starting configuration \textbf{(a')}, as in the example of two spherical caps touching at a point.}
 \label{fig:touching_sing_deform}
\end{figure}

Stationarity with respect to $\text{Per}_U(\cdot) + \int_{\cdot \cap U} g -\lambda |\cdot \cap U|$ for ambient deformations implies that $\p E \setminus T$ is smooth and has mean curvature $(g -\lambda)\nu$, for some $\lambda \in \R$, where $\nu$ is the outer unit normal to $E$.\footnote{It is easily checked via the maximum principle, see e.g.~\cite{BW}, \cite{BW1}, that the mean curvature vector of $\p E$ points to the exterior of $E$ in case 2, in a neighbourhood of $T$, hence one must have $\lambda < g$ in a neighbourhood of $T$ --- we do need to use this fact.}
Let $\p E = \Gamma_1 \cup \Gamma_2$ and $E= F_1\cup F_2$ be the decompositions such that $F_1, F_2$ are open sets with $\p F_j = \Gamma_j$ for $j\in\{1,2\}$ and $F_1\cap F_2 = \emptyset$, $\overline{F}_1\cap \overline{F}_2 =T$; here $\Gamma_1 = A_1 \cup A_3 \cup T$ and $\Gamma_2 = A_2 \cup A_4 \cup T$, with notations as in Section \ref{assumptions}. (With reference to the bottom left picture of Figure \ref{fig:touching_sing_deform} $F_1$ and $F_2$ are the two cuspidal domains repectively on the left and on the right of the touching curve $T$.) The set $F_1\cup F_2$ is stationary for arbitrary ambient deformations with respect to $\text{Per}_U(\cdot) + \int_{\cdot \cap U} g -\lambda |\cdot \cap U|$ for a choice of $\lambda$ and $g$. We will show, in a first instance, that however neither $F_1$ nor $F_2$ is (separately) stationary, for volume-preserving ambient deformations, with respect to $\text{Per}_U(\cdot) + \int_{\cdot \cap U} g$. 

The condition that the mean curvature on the smooth parts of $\Gamma_j$ is $(g-\lambda)\nu$ (where $\nu$ is the outer unit normal to $F_k$) implies already that, for each $k\in\{1, 2\}$, $F_k$ is not stationary for arbitrary ambient deformations with respect to $\text{Per}_U(\cdot) + \int_{\cdot \cap U} g -\lambda' |\cdot \cap U|$ if $\lambda' \neq \lambda$. We therefore only need to prove that, for each $k\in\{1, 2\}$, $F_k$ is not stationary for arbitrary ambient deformations with respect to $\text{Per}_U(\cdot) + \int_{\cdot \cap U} g -\lambda |\cdot \cap U|$. Then Proposition \ref{Prop:Lagr_mult} will imply that $F_k$ is not stationary, for volume-preserving ambient deformations, with respect to $\text{Per}_U(\cdot) + \int_{\cdot \cap U} g$. 

We analyse the first variation of area for the integral varifold $|\p^* F_k|$, for each chosen $k\in \{1,2\}$. This varifold is given by the sum of (the multiplicity-$1$ varifolds associated to) two smooth sufaces-with-boundary (the boundary is $T$ for both). The first variation is additive, so we compute it separately for the two sufaces-with-boundary and add up the results. For a submanifold-with-boundary, the first variation is given (decomposing the vector field into normal and tangential components and using the divergence theorem for the latter, see \cite{Maggi} or \cite{SimonNotes}) by the sum of two terms: one is the integration (on the interior) of the vector field dotted with minus the mean curvature vector; the second is given by minus the integration on the boundary of the vector field dotted with the inward conormal at the boundary. We obtain that the first variation of $|\p^* F_k|$ is represented by the vector-valued Radon measure whose abosolutely continuous part is $-(g-\lambda) \mathcal{H}^2 \res (\Gamma_j \setminus T) \nu$ (where $\nu$ is the outer unit normal to $F_k$) and whose singular part is $-2({\Hc}^{1}\res T)\hat{n}$, where $\hat{n}$ is the conormal to either of the two surfaces-with-boundary at $T$, oriented towards the interior of (either) hypersurface-with-boundary. (Note that $\hat{n}$ spans, with the tangent to $T$, the tangent plane to $D_j$ along $T$, for $j\in \{1,2\}$.)

With this knowledge of the first variation, we can exhibit an ambient deformation of $F_k$ that decreases $\text{Per}_U(\cdot) + \int_{\cdot \cap U} g-\lambda |\cdot \cap U|$ to first order (for $k$ fixed). Let $\hat{n}$ be a smooth extension to $U$ of the the unit vector orthogonal to $T$, tangent to $D_j$ and pointing to the interior of $F_k$ (the above conormal). Such an extension is possible, by taking the initial $U$ smaller if necessary. Let $\rho\in C^1_c(U)$ with $\rho\geq 0$ and $\rho\equiv 1$ on $B$, where $B$ is an open ball centred at $0$. Define $X\in C^1_c(U;\R^3)$ by $X=\rho \hat{n}$. Let $\nu$ be the outer normal to $F_k$ on $\Gamma_k \setminus T$. Then the first variation with respect to $\text{Per}_U(\cdot) + \int_{\cdot \cap U} g-\lambda |\cdot \cap U|$ evaluated on $X$ is given by (using the first variation formula \cite[Proposition 17.8]{Maggi} for the potential and volume terms)
$$\int_{\Gamma_k\setminus T} (\lambda-g)\nu  \cdot X - 2\int_T \hat{n} \cdot X +\int_{\Gamma_k\setminus T} g \nu \cdot X - \int_{\Gamma_k\setminus T} \lambda \nu \cdot X =- 2\int_T \hat{n} \cdot X <0$$ by the choice of $X$ (with integration with respect to ${\Hc}^{1}$ on $T$ and ${\Hc}^{2}$ on $\Gamma_k\setminus T$).

The above implies (through Proposition \ref{Prop:Lagr_mult}) the existence of ambient volume-preserving deformations, separately for $F_1$ and $F_2$, that decrease to first order $\mathcal{E}_U$. Such volume-preserving deformations can be built with initial velocity still equal to $X$ in a neighbourhood of $T$. For that, it suffices (for each $k$) to combine the previous deformation of $F_k$ with another ambient deformation of $F_k$ that only acts inside a compact subset $U\setminus (T\cup B)$ and that balances the volume-constraint (this corresponds to a choice of $Y\in C^1_c(U\setminus (T\cup B))$ in step 3 of the proof of Proposition \ref{Prop:Lagr_mult}).

Denote the deformations just obtained by $F_k^t$ for $t\in[0,\eps)$, $F_k^0=F_k$. By construction $\overline{F}_1^t \cap \overline{F}_2^t\subset T$,
The union $F_1^t \cup F_2^t$ gives a one-parameter \textit{non-ambient} volume-preserving deformation of $F_1 \cup F_2$ with $\mathcal{E}_U$ decreasing to first order at $t=0^+$; moreover, this is a deformation of break up type, as in Definition \ref{Dfi:coalescence_a} (bottom row of Figure \ref{fig:touching_sing_deform}). (Note also that the initial speed for this deformation is bounded.) We have produced a deformation that contradicts the stationarity assumption of Definition \ref{Dfi:station_stable_one_sided}.

\begin{oss}
In case 1, the sets $F_1$ and $F_2$ that we introduced above satisfy $F_1\cup F_2 = U\setminus \overline{E}$ and the exact same argument leads to a volume-preserving deformation of coalescence type that contradicts stationarity (top row of Figure \ref{fig:touching_sing_deform}).
\end{oss}

\medskip

\begin{figure}[h]
\centering
 \includegraphics[width=6cm]{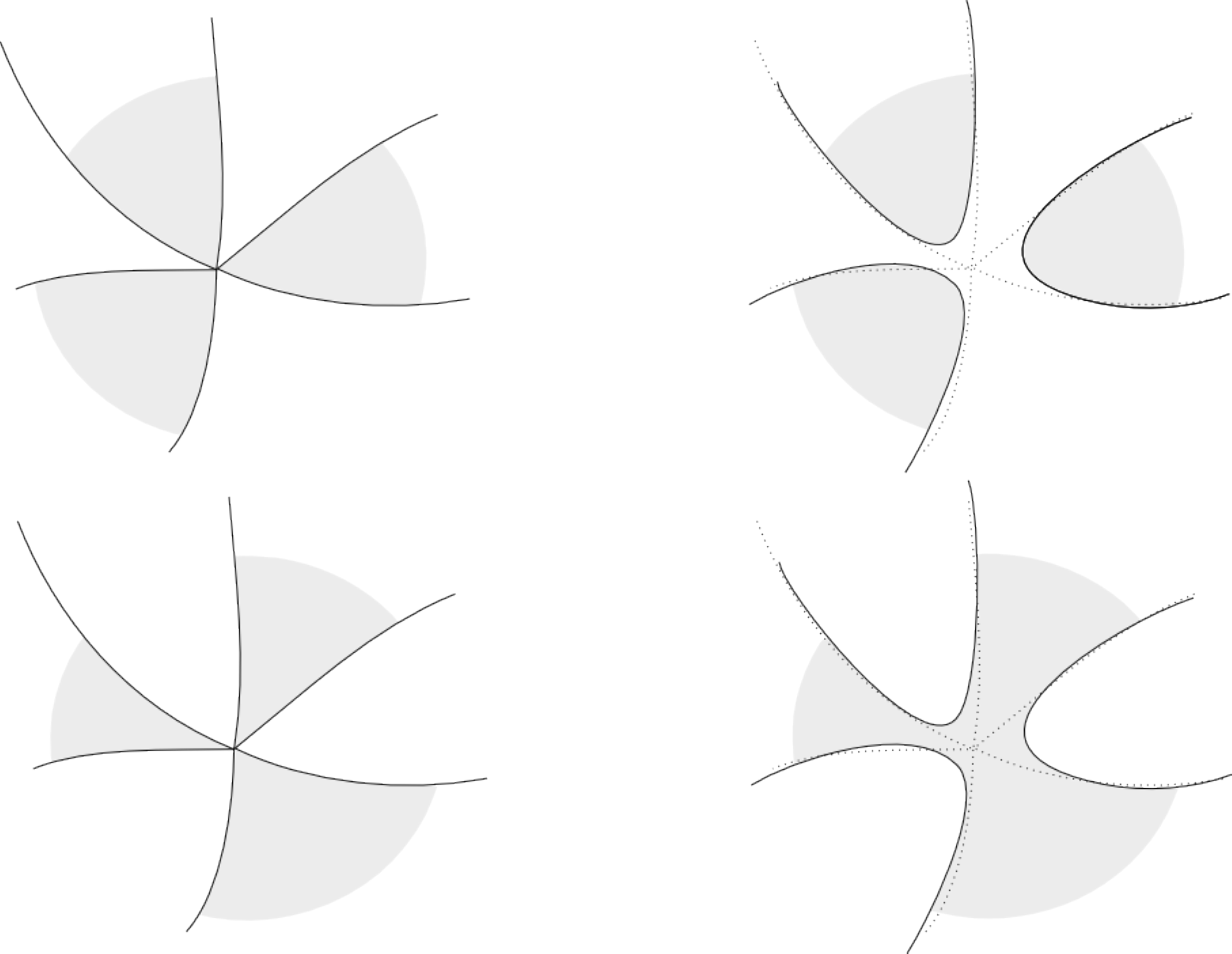} 
\caption{Top row: virtual deformation ``breakup''. Bottom row: virtual deformation ``coalescence''. The boundary is deformed in the same way in both cases. The starting configuration is \textbf{(b)} (one should consider cylinders over the depicted sets).}
 \label{fig:class_sing_deform}
\end{figure}

Passing to case \textbf{(b)}, stationarity for ambient deformations (and the fact that $\overline{E} \cap U$ is connected) implies that there exists $\lambda \in \R$ such that mean curvature is $(g-\lambda)\nu$ on $(\p E \setminus T) \cap U$, where $\nu$ is the outer unit normal. Upon redefining $g$ and $\lambda$ (by a change of sign) if needed, we assume that each wedge of $E\cap U$ has an angle smaller than $\pi$ at $0$ (in the slice orthogonal to $T$ at $0$), where by wedge we mean each connected componenent of the interior of $E\cap U$. This is possible because an angle $\geq \pi$ cannot be present simultaneously for $E$ and for $U\setminus E$. We choose the indexation of $L_k$ in the decomposition given in \textbf{(b)} so that each wedge is bounded by $L_k \cup L_{k+1}$ with $k$ odd.

Then for each fixed odd $k$, the (open) wedge $W$ bounded by $L_k \cup L_{k+1}$ admits a one-sided ambient deformation $\{W^t\}_{t\in[0,\eps)}$ that is volume-preserving, has bounded initial speed, and that decreases to first order the energy $\text{Per}_U(\cdot) +\int_{\cdot \cap U} g$. The initial speed can be chosen to point towards the interior of $W$ along $T$, therefore we can ensure that the closures of any two (distinct) deformed wedges only intersect at points of $T$. All of this follows by arguments very similar to those used to treat \textbf{(a)}, which essentially come down to the observation that the leading term in the first variation for each single wedge is the boundary contribution at $T$ (of each surface-with-boundary $L_j$), and it points to the exterior of the wedge thanks to the condition that the opening angle is $<\pi$. (So that pushing a wedge to its interior in a neighbourhood of $0\in T$ will decrease the energy.)
We then produce, by taking the union of the ambient deformations for each wedge, a non-ambient volume-preserving deformation of $E\cap U$, with bounded initial speed, of break up type (as in Definition \ref{Dfi:coalescence_b}) that decreases $\mathcal{E}_U$ to first order (top row of Figure \ref{fig:class_sing_deform}). If we had passed to the complement of $E$ in the beginning, then this is a coalescence type deformation for $E$ (bottom row of Figure \ref{fig:class_sing_deform}).\footnote{If the acute angle condition is valid for all wedges of $E$ and for all wedges of $U\setminus E$, then our proof, repeated for $E$ and for $U\setminus E$, gives both a coalescence and a break up deformation of $E$ that reduce energy. If the acute angle condition is valid for $E$ but not for $U\setminus E$, then our proof produces only a break up deformation of $E$, while if the acute angle condition is valid for $U\setminus E$ but not for $E$ then our proof produces only a coalescence deformation. In fact, even in the latter two cases, it is possible to decrease energy both by a coalescence and by a break up deformation, however the proof given needs to be adapted: one has to consider the case in which a (single) wedge has opening angle larger than $\pi$ --- its contribution to the first variation would be dominated by the contribution coming from another wedge that has an acute angle.}

\medskip

In conclusion, we have proved that \textbf{(a)} and \textbf{(b)} cannot be present in $\Om$ under the assumptions of Theorem \ref{thm:main}, specifically under the stationarity condition of Definition \ref{Dfi:station_stable_one_sided}.

\section{Ruling out (a') by second variation}
\label{finalproof}

In Section \ref{C} we have proved that under the assumptions of Theorem \ref{thm:main} the configurations \textbf{(a)} and \textbf{(b)} are nowhere present in $\Om$. This, together with Proposition \ref{Prop:Lagr_mult_mean_curv}, ensures that the assumptions of Theorem \ref{thm:main} imply those of Theorem \ref{1} and lets us conclude that $\p E \cap \Om$ is smoothly embedded except possibly at isolated points, around which the only possible structure is \textbf{(a')}. To complete the proof of Theorem \ref{thm:main} we only need to establish Proposition \ref{Prop:a'}, that is we prove that configuration \textbf{(a')} violates the equilibrium assumptions of Theorem \ref{thm:main}. More precisely we will show that it violates the stability condition of Definition \ref{Dfi:station_stable_one_sided}.

\medskip

Assume that in the open set $U\subset \subset \Om$ we have that $\p E \cap U$ is as in \textbf{(a')}. Note that both $\overline{E} \cap U$ and $\overline{U\setminus E}$ are connected (regardless of which is $E\cap U$ and which its complement).
By choosing coordinates suitably we fix $U$ to be the cylinder $B^2_{3R}(0) \times (-R,R)\subset \R^2\times \R$, for some positive $R$, and $\p E \cap U= \text{graph}(u) \cup \text{graph}(v)$ with $u\leq v$, $u, v \in C^2(B_{3R})$, $u(0)=v(0)=0$, $Du(0) = Dv(0) =0$, $u<v$ on $B^2_{3R}(0)  \setminus \{0\}$.  We will use cartesian coordinates $(x,y,z)$ and cylindrical coordinates $(r,\theta, z)$. To lighten notation, we will also write $B^2_s$ for $B^2_s(0)$. There exists $\lambda \in \R$ for which the second stationarity condition in Proposition \ref{Prop:Lagr_mult} holds; as usual, we can make the choice that $E\cap U$ is given by the regions below $u$ and above $v$, upon changing sign to $g$ and $\lambda$ if needed. The graphs of $u,v$ then have mean curvatures respectively $(g-\lambda)$ and $-(g-\lambda)$ with respect to the upward pointing normal.\footnote{ The condition on the mean curvature is verfied thanks to Proposition \ref{Prop:Lagr_mult_mean_curv} at any point $\neq 0$, since for a smooth surface the classical mean curvature agrees with the generalised mean curvature in the varifold sense; by the smoothness of $u,v$ and $g$ the conclusion extends across $0$ for each graph.} \footnote{By the maximum principle it must hold $\lambda>g(0)$ for this configuration to arise, however we will not make use of this fact.} By choosing $R$ sufficiently small we ensure that $\sup_{B^2_{s}}|u|, \sup_{B^2_{s}}|v| < s$ and that the areas of $\text{graph}(u) \cap (B^2_{s} \times (-s,s))$ and $\text{graph}(v) \cap (B^2_{s} \times (-s,s))$ are bounded by $2\pi s^2$ for all $s\leq R$. 

\medskip

Denote by $\p_r$ the vector field $\frac{(x,y,0)}{\sqrt{x^2+y^2}}$ on $U \setminus \{x=y=0\}$. Let $\chi$ be a smooth function on $B^2_{3R}(0)\times (-R,R)$, independent of the variable $z$, with $0\leq \chi \leq 1$, and that is identically $1$ on $r \leq R_0$ and identically $0$ on $r\geq 2R_0$, with $R_0\in (0,R/6)$ to be chosen. We will consider the vector field $\chi \p_r$ on $U \setminus \{x=y=0\}$. Let $\Phi_t$, for $t\in (0,R_0/2)$, denote the flow associated to $\chi \p_r$. This means that $\Phi_t(x)=\gamma(t)$, where $\gamma$ solves $\gamma'(t)=\chi(\gamma(t)) \p_r$, $\gamma(0)=x \in U \setminus \{x=y=0\}$ (that is, $\gamma$ is the flow line of $\chi \p_r$ starting at $x$). Then $\{\Phi_t\}_{t\in[0,R_0/2)}$ is a one-sided one-parameter family of diffeomorphisms, with $\Phi_t$ mapping $(B^2_{3R} \setminus \{0\}) \times (-R,R)$ onto $(B^2_{3R} \setminus B_t^{2}) \times (-R,R)$. Also note that $\Phi_t \circ \Phi_s = \Phi_{t+s}$, since the vector field is time independent. Moreover, $\Phi_0=Id$ and, for every $t\in (0,R_0/2)$, $\Phi_t=Id$ on the cylinder $\{r\geq R\}$. This family of diffeomorphisms induces a one-sided deformation $\tilde{M}_t$ of $M_0=\text{graph}(u) \setminus \{0\}$, namely $\tilde{M}_t=\Phi_t(M_0)$. Note that the restriction of $\tilde{M}_t$ to the cylinder $\{r\in (t,  t+R_0/2]\}$ is the graph of $u(r-t,\theta)$ (in cylindrical coordinates). Roughly speaking, the deformation opens up a hole at $0$ and pushes $\text{graph}(u)\setminus \{0\}$ radially in the horizontal $(x,y)$-directions, with no vertical displacement.
We will consider $\tilde{M}_t$ as an embedded surface in $U$. Then $\tilde{M}_t$ does not intersect the cylinder $\{r <t\}$ and the closure $\overline{\tilde{M}}_t$ is a surface-with-boundary in $U$ and the boundary $\p \overline{\tilde{M}_t}$ in $U$ is the circle $\{r=t, z=0\}$. 

The idea is to combine the one-sided deformation $\tilde{M}_t$ of $\text{graph}(u) \setminus \{0\}$ with an analogous deformation $\tilde{N}_t = \Phi_t(N_0)$ of $N_0=\text{graph}(v) \setminus \{0\}$; since $\p \tilde{N}_t=\p \tilde{M}_t$ this gives rise to a coalescence type deformation $\overline{\tilde{M}}_t\cup \overline{\tilde{N}}_t$ of $\p E \cap U$ in $U$. (Note that $\tilde{M}_t \cap \tilde{N}_t = \emptyset$ for all $t$.) We denote by $\tilde{E}_t$ the Caccioppoli set that agrees with $E$ in the complement of $U$ and is given, in $U$, by 
$$\tilde{E}_t \cap U=\Phi_t\left((E\cap U)\setminus \{(0,0,z):|z|<R\}\right) \cup (\overline{B}^2_t(0)\times (-R,R)),$$
noting that the boundary of $\tilde{E}_t$ in $U$ is given by $\overline{\tilde{M}}_t\cup \overline{\tilde{N}}_t$. (To see that $\tilde{E}_t$ is well-defined, recall that $\{r<t\}\times \{|z|>t\}$ is contained in $E$ for all $t\leq R_0/2$, by the initial choice of $U$, and note that $\tilde{E}_t = E$ in a neighbourhood of $\p U$ for all $t\leq R_0/2$.) In order to ensure the volume-preserving condition we will modify this deformation in the cylinder $\{2R<r<3R\}\times (-R,R)$, in which $\tilde{E}_t=E$ for all $t$.

For this purpose, we analyse the function $\mathbb{V}(t)=|\tilde{E}_t\cap U|$. The first variation of volume at $t=0^+$, i.e.~$\mathbb{V}'(0)$, is not affected by the volume of the cylinder $\{r\leq t\}\times (-R, R)$ (which we included in $\tilde{E}_t$), since this volume is $2\pi R t^2$, so its rate of change at $t=0^+$ is $0$. Hence $\mathbb{V}'(0)$ is the same as the first variation of volume for $\Phi_t\left((E \cap U)\setminus \{(0,0,z):|z|<R\}\right)$ at $t=0^+$. Recall that $\Phi_t$ is a diffeomorphism of $(B^2_{3R}\setminus \{0\}) \times (-R,R)$ onto $(B^2_{3R}\setminus B^2_t) \times (-R,R)$. Then the first variation of volume can be computed as in the proof of \cite[Proposition 17.8]{Maggi} and is equal to $\int_{E\cap (U\setminus \{(0,0,z):|z|<R\})}\text{div}\,(\chi \p_r)$. This divergence is bounded by $1/r$ in a neighbourhood of $\{(0,0)\} \times (-R, R)$, and is thus summable on the given domain. Approximating this integral by $\int_{E\cap (U\setminus \{(r,z):|z|<R, r<\rho\})}\text{div}\,(\chi \p_r)$ with $\rho>0$, $\rho \to 0$, we can use the divergence theorem to obtain that the first variation of $\mathbb{V}$ is given by 
$$\mathbb{V}'(0)=\lim_{\rho \to 0} \int_{\p^* E \cap (U\setminus \{(r,z):|z|<R, r<\rho\})} \nu \cdot (\chi \p_r) + \lim_{\rho \to 0} \int_{\p\{(r,z):|z|<R, r<\rho\})} (-\p_r) \cdot (\chi \p_r).$$
The $(-\p_r)$ in the second integral is the unit outer normal to the surface of integration, and $\nu$ in the first integral is the unit outer normal to $E$. The second integrand is bounded in absolute value by $4\pi\rho R$, hence the first variation of volume is $\mathbb{V}'(0) =\int_{\text{graph}(u)} \chi\p_r \cdot \nu+\int_{\text{graph}(v)} \chi \p_r \cdot \nu$. We note from this expression that $\mathbb{V}'(0)$ tends to $0$ as $R_0\to 0$.

We will also need to compute $\mathbb{V}''(0)$. To that end, we first show that $\mathbb{V}'(t)= \int_{M_t \cup N_t}  \chi \nu \cdot \p_r$ for all $t>0$. This is seen as follows. 
Recall that $\Phi_s$ is a diffeomorphism from $(B_{3R}^2 \setminus B_t^2) \times (-R, R)$ onto $(B_{3R}^2 \setminus B_{t+s}^2) \times (-R, R)$ and that $\Phi_t \circ \Phi_s = \Phi_{s+t}$. Then for any $t>0$ the derivative $\mathbb{V}'(t)$ can be computed as sum of two terms: 
$$\left.\frac{d}{ds}\right|_{s=0^+}\left|\Phi_s\left(E_t \cap  \left((B_{3R}^2 \setminus B_{t+s}^2) \times (-R, R)\right)\right)\right| + \left.\frac{d}{ds}\right|_{s=0^+}|B_{t+s}^2 \times (-R, R)|.$$
The first term (using \cite[Proposition 17.8]{Maggi}, with the funcion $g$ therein replaced by the constant $1$), is equal to $\int_{E_t \cap  \left((B_{3R}^2 \setminus B_{t}^2) \times (-R, R)\right)} \text{div} (\chi \p_r)$, for which we use the divergence theorem (leading to the first two terms on the right-hand-side of the next identity); in conclusion,
$$\mathbb{V}'(t) = \int_{\p E_t \cap  \left((B_{3R}^2 \setminus B_{t}^2) \times (-R, R)\right)}  \chi \p_r \cdot \nu + \int_{\p B^2_t \times (-R, R)} \chi \p_r \cdot (-\p_r) + \int_{\p B^2_t \times (-R, R)} 1.$$
(The $(-\p_r)$ in the second integrand is the unit outer normal on the surface of integration.) Since $\chi=1$ on the surface of integration for the second integrand, the second and third integrand cancel each other; note also that $\p E_t \cap  \left((B_{3R}^2 \setminus B_{t}^2) \times (-R, R)\right)= \p E_t \cap U = M_t \cup N_t$. Thus, as claimed (with $\nu$ denoting the outer unit normal to $\tilde{E}_t$)
$$\mathbb{V}'(t) = \int_{M_t}  \chi \p_r \cdot \nu+\int_{N_t}  \chi \p_r \cdot \nu.$$

We now compute $\mathbb{V}''(0)$ by differentiating the expression obtained for $\mathbb{V}'(t)$. Using that both for $M_t$ and $N_t$ we have $\p_r \cdot \nu=0$ on the boundaries $\p M_t$ and $\p N_t$ (so that there is no boundary contribution when differentiating the area measure), we get
$$\mathbb{V}''(0)= \int_{M_0 \cup N_0} \chi^2 (\nu \cdot \p_r) (\vec{H}\cdot \p_r) + \int_{M_0 \cup N_0} \left.\frac{d}{dt}\right|_{t=0} (\chi \nu \cdot \p_r),$$
where $\vec{H}$ stands for the mean curvature vector of $M_0 \cup N_0$. Hence
$$|\mathbb{V}''(0)|\leq  \int_{M_0 \cup N_0}\chi^2|\vec{H}| + C\int_{M_0 \cup N_0} \chi|D(\chi \p_r)|+ \int_{M_0 \cup N_0} \left|\left.\frac{d \nu}{dt}\right|_{t=0^+}\right| ,$$
for some (dimensional) constant $C$. We have (by direct computation) $\left|\left.\frac{d \nu}{dt}\right|_{t=0^+}\right|\leq CD(\chi \p_r)$. Recall now that $\vec{H}$ is bounded, $|D\p_r|\leq C/r$ is summable, the area of $M_0 \cup N_0$ is bounded by $2\pi R_0^2$ in the set where $\chi\neq 0$, and $\int_{M_0 \cup N_0} |D \chi|$ tends to $0$ as $R_0 \to 0$ ($D\chi$ is bounded by $\frac{2}{R_0}$ on an annulus of area at most $2\pi R_0^2$ and vanishes otherwise). In particular we see that $\mathbb{V}''(0)$ tends to $0$ as $R_0\to 0$.

\medskip

\medskip

\noindent \textit{Relevant volume-preserving coalescence deformation.} Consider the two subdomains $U_1=B^2_R \times (-R,R)$ and $U_2=(B^2_{3R} \setminus \overline{B}^2_R)\times (-R,R)$. By construction $\tilde{E}_t$ and $E$ coincide in $U_2$. Let $\zeta_0 \in C^1_c(M_0 \cap U_2)$ be such that $\int_{M_0} \zeta_0 \neq 0$ (this choice is independent of $R_0$). Set $\zeta=\frac{-\mathbb{V}'(0)}{\int_{M_0}\zeta_0}\zeta_0$, so that $\int_{M_0} (\zeta \nu) \cdot \nu = -\mathbb{V}'(0)$, and extend $\zeta \nu$ to a vector field in $U_2$, compactly supported away from $N_0 \cap U_2$. Following the argument in \cite[Lemma 2.4]{BarbDoCarmo} (also recalled in step 1 of the proof of Proposition \ref{Prop:Lagr_mult}) we pick another vector field $Y \in C^1_c(U_2 \setminus N_0)$ such that $\int_{M_0} Y\cdot \nu \neq 0$ (the choice of $Y$ is independent of $R_0$) and construct, for $t\in (-t_0,t_0)$ for some $t_0>0$, an ambient deformation $\Psi_t=Id + t\zeta \nu +s(t) Y$ with $s(0)=0$ and $s'(0)=0$, i.e.~with initial speed $\zeta \nu$, and such that $|\Psi_t(E)\cap U_2|=-\mathbb{V}(t)$. This follows from the implicit function theorem, by considering the level set $f(t,s)=0$ for the function $f(t,s)=|(Id+t \zeta \nu + s Y)(E) \cap U_2|+\mathbb{V}(t)$, noting that $f(0,0)=0$ and $\frac{\p f}{\p s}(0,0) = \int_{M_0} Y\cdot \nu \neq 0$. Then the level set is described by $(t,s(t))$ for $t\in (-t_0,t_0)$; moreover, $s'(0)=\frac{-\frac{\p f}{\p t}(0,0)}{\frac{\p f}{\p s}(0,0)}$, which vanishes because $\frac{\p f}{\p t}(0,0) = \mathbb{V}'(0) +\int_{M_0} (\zeta \nu)\cdot \nu = 0$.

We will in fact only need to employ this deformation for $t\in [0,t_0)$. Note that $\Psi_t(M_0)$ remains disjoint from $N_0$ for all $t\leq t_0$. 
Combining the deformation $\Psi_t$, which only acts in $\{2R<r<3R\}\times (-R, R)$, with the deformation $\tilde{M}_t$ introduced above, which only acts in $\{r<R\}\times (-R, R)$, we obtain a deformation $E_t$ of $E$ that is volume preserving in $U$. The boundary of $E$ in $U$ is given by $\overline{M}_t \cup \overline{N}_t$, where $M_t$ is equal to $\tilde{M}_t$ in $\{r<2R\} \times (-R, R)$ and equal to $\Psi_t(M_0)$ in $\{2R\leq r<3R\} \times (-R, R)$, while $N_t=\tilde{N}_t$. Recalling that $\mathbb{V}'(0) \to 0$ as $R_0\to 0$ ($\zeta_0$ and $\nu$ are fixed) we note that $\sup_{U_2} |\zeta \nu|$ and $\sup_{U_2} |D (\zeta \nu)|$ can be made arbitrarily small by choosing $R_0$ sufficiently small.

The deformation $\{E_t\}_{t\in[0,t_0)}$ is of coalescence type: $\p E_t \cap U$ is the image of a $C^1$ map $\psi_t:S^1 \times (-1,1) \to U$, smoothly depending on $t$, whose differential is allowed to vanish (for each $t$) on $S^1 \times \{0\}$ and that is a smooth embedding away from the image of $S^1 \times \{0\}$ (as required in Definition \ref{Dfi:coalescence_a'}). At $t=0$ the circle $S^1 \times \{0\}$ is mapped to $0$, while for $t>0$ it is mapped to $\p M_t=\p N_t$. 
The initial speed of the deformation just exhibited is given by $X=\chi \p_r + \zeta \nu$. This is a well-defined $\R^3$-valued map on $S^1 \times (-1,1)$ (and bounded as in Definition \ref{Dfi:station_stable_one_sided}). We remark that on $S^1 \times \{0\}$ it fails to be identifiable with an ambient vector field in $U$; at $(p,0)\in S^1 \times (-1,1)$ the vector field is given by $\frac{p}{|p|}$, thinking of $p\in S^1 \subset \R^2 \times \{0\} \subset \R^3$. We stress that the speed of the deformation $\p E_t \cap U_1$ is equal to $\chi \p_r$ (as a vector field on $S^1 \times (-1,1)$), and thus independent of $t\in[0,t_0)$; the speed of $\p E_t \cap U_2$ is equal to $\zeta \nu + s(t) Y$m hence $t$-dependent (the latter is also well-defined as a vector field on $U_2$). 

The volume-preserving condition implies, arguing as we did for $\mathbb{V}'(0)$ (using the approximation argument with $\rho\to 0$), that 
$$\int_{\text{graph}(u)} X \cdot \nu+\int_{\text{graph}(v)} X \cdot \nu =0.$$

In view of forthcoming arguments, we compute also $s''(0)$. Since $\frac{\p f}{\p t}=0$ at $(t,s)=(0,0)$, the implicit function theorem gives $s''(0)=\frac{-\frac{\p^2 f}{\p t^2}}{\frac{\p f}{\p s}}$, with the derivatives on the right-hand-side evaluated at $(t,s)=(0,0)$. To compute $\frac{\p^2 f}{\p t^2}$ we recall \cite[(17.11)]{Maggi} and argue as in the proof of \cite[Proposition 17.8]{Maggi}, keeping also the terms that are quadratic in $t$. We obtain $\left.\frac{\p^2 f}{\p t^2}\right|_{(t,s)=(0,0)} = \mathbb{V}''(0)+\int_{E\cap U_2} (\text{div}(\zeta \nu))^2 -\int_{E\cap U_2} (\text{trace}(D(\zeta \nu))^2)$. All three terms tend to $0$ as $R_0\to 0$: for $\mathbb{V}''(0)$ this was shown earlier, for the second and third terms this follows from the fact (see above) that $\sup_{U_2}|\zeta \nu|$ and $\sup_{U_2} |D(\zeta \nu)|$ tend to $0$ as $R_0\to 0$. The denominator $\frac{\p f}{\p s}$ is determined by the choice of $Y$ (so independent of $R_0$). In conclusion, $s''(0) \to 0$ as $R_0 \to 0$.

\medskip

\medskip

\noindent \textit{First variation.} We will now check that the first variation of $\mathcal{E}_U(E_t)=\text{Per}_U(\p^* E_t) + \int_{E_t\cap U} g$ at $t=0^+$ (along the deformation $\{E_t\}_{t\in[0,t_0)}$ just constructed) is
\begin{equation}
\label{eq:first_var_0}
\left.\frac{d}{dt}\right|_{t=0^+} \mathcal{E}_{U}(E_t)=-\int_{\text{graph}(u)} \vec{H} \cdot X   + \int_{\text{graph}(u)} g\nu \cdot X-\int_{\text{graph}(v)} \vec{H} \cdot X   + \int_{\text{graph}(v)} g\nu \cdot X,
\end{equation}
where $\vec{H}$ stands for the mean curvature vector (of $\text{graph}(u)$ and $\text{graph}(v)$ respectively, in the two corresponding integrals). 
It is convenient to compute the above first variation separately in the two subdomains $U_1=B^2_R\times (-R,R)$ and $U_2=(B^2_{3R}\setminus \overline{B}^2_R)\times (-R,R)$. 
The first variation in $U_2$ is given by the well-known formulae (see \cite{BarbDoCarmo}, \cite{Maggi}), since in this domain the deformation is an ambient one, induced by a one-parameter family of diffeomorphisms whose initial speed is the vector field  $\zeta \nu \in C^1_c(U_2)$. We thus obtain 

$$\left.\frac{d}{dt}\right|_{t=0} \mathcal{E}_{U_2}(E_t)=-\int_{\text{graph}(u)} \zeta \vec{H} \cdot  \nu   + \int_{\text{graph}(u)} g \zeta \nu \cdot \nu -\int_{\text{graph}(v)} \zeta \vec{H} \cdot \nu    + \int_{\text{graph}(v)} g \zeta \nu \cdot \nu.$$

In $U_1$ the initial speed $\chi\p_r$ is not compactly supported in $U_1$, and the deformation moves the boundary of $\overline{M_t}$ and $\overline{N_t}$. For future purposes, for $U_1$ it will be convenient to compute the first variation for an arbitrary $t$. Recalling that $E_t \cap U_1 = \Phi_t(E \cap U_1) \cup (\overline{B}^2_t \times (-R,R))$, and using that $\Phi_s \circ \Phi_t = \Phi_{s+t}$, the first variation at $t\in[0,t_0)$ in $U_1$ is given by (as we will justify below)
\begin{equation}
\label{eq:first_var_t}
\frac{d}{dt} \mathcal{E}_{U_1}(E_t) = -\int_{M_t} \vec{H} \cdot (\chi \p_r) + \int_{\p M_t} \vec{n} \cdot (\chi \p_r) + \int_{M_t} g\nu \cdot (\chi \p_r)  
\end{equation}
$$ -\int_{N_t} \vec{H} \cdot (\chi \p_r) + \int_{\p N_t} \vec{n} \cdot (\chi \p_r) + \int_{N_t} g\nu \cdot (\chi \p_r), $$
where $\vec{H}$ stands for the mean curvature vector (of $M_t$ and $N_t$ respectively, in the two corresponding integrals) and $\vec{n}$ is the unit conormal at $\p M_t$, pointing away from $M_t$, or the unit conormal at $\p N_t$, pointing away from $N_t$. (By construction, the unit conormal is the same for $M_t$ and $N_t$; recall that $\p M_t = \p N_t$.)

The first, second, fourth and fifth terms in (\ref{eq:first_var_t}) arise from the first variation of area of a surface-with-boundary (see \cite{SimonNotes}).
The third and sixth terms of (\ref{eq:first_var_t}) arise from the first variation of $\int_{E_t\cap U_1} g$. For $t>0$ the latter claim follows by splitting the domain $E_t\cap U_1$ into the two subdomains $(E_t \cap U_1) \setminus (B_t \times (-R, R))$ and $B_t \times (-R, R)$. The contribution coming from the first subdomain is computed as in the proof of \cite[Proposition 17.8]{Maggi}, using that $\Phi_s$ is a diffeomorphism from $(B_{3R}\setminus B_t) \times (-R, R)$ onto $(B_{3R}\setminus B_{t+s}) \times (-R, R)$, and gives $\int_{E\cap ((B_{3R}\setminus B_t) \times (-R, R))} \text{div}\,(g \chi \p_r)$. Using the divergence theorem we obtain the third and sixth terms of (\ref{eq:first_var_t}) and an additional summand $\int_{\p B^2_t \times (-R, R)}g \chi \p_r\cdot (-\p_r)$. The contribution from the second subdomain gives rise to $\int_{\p B^2_t \times (-R, R)}g$. These two last integrals on the surface $\p B^2_t \times (-R, R)$ cancel each other (since $\chi=1$ on the surface of integration). In the case $t=0$ we argue as we did earlier for $\mathbb{V}'(0)$ (using the approximation argument with $\rho\to 0$) to reach the same conclusion.\footnote{For $t_0>0$ one can give an alternative argument as follows: the deformation of $E_{t_0}$ that we are considering can be equivalently induced by the one-parameter family of diffeomorphisms $Id+s\tilde{X}_s$, where $\tilde{X}_s=\psi (\chi \p_r)$ and $\psi$ is a smooth function that only depends only on $r$ and is equal to $1$ for $r\geq t_0/2$ and equal to $0$ for $r\leq t_0/4$. This is well-defined as a vector field in $U$, so the usual formula for the first variation of volume applies.}

Recalling that $\p M_t$ is the circle in the $(x,y)$-plane with centre at $0$ and radius $t$ (with $t\leq R_0/2$) we have $\hat{n}=-\p_r=-\chi \p_r$ on $\p M_t$, for all $t\in [0, t_0)$. This implies that the second and fifth terms of (\ref{eq:first_var_t}) are each equal to $-2\pi t$. 

Not surprisingly, sending $t\to 0^+$ in (\ref{eq:first_var_t}) we find $\left.\frac{d}{dt}\right|_{t=0^+} \mathcal{E}_{U_1}(E_t)$, which combined with $\left.\frac{d}{dt}\right|_{t=0} \mathcal{E}_{U_2}(E_t)$ gives (\ref{eq:first_var_0}) again. Substituting in (\ref{eq:first_var_0}) $\vec{H} = (g-\lambda)\nu$ for a certain $\lambda \in \R$ (as found in the beginning of this section), and recalling $\int_{\text{graph}(u)} X\cdot \nu + \int_{\text{graph}(v)} X\cdot \nu =0$, we get that the first variation at $t=0^+$ vanishes:
$$\left.\frac{d}{dt}\right|_{t=0^+} \mathcal{E}_{U}(E_t)=0.$$

\medskip

\medskip

\noindent \textit{Second variation.} We will now show that the second variation of $\mathcal{E}_U(E_t)=\text{Per}_U(\p^* E_t) + \int_{E_t\cap U} g$ at $t=0^+$ (for the same deformation $E_t$) is strictly negative, contrary to the assumption (we just checked that we have vanishing one-sided first variation, so the implication in Definition \ref{Dfi:station_stable_one_sided} yields that that the second variation along this deformation has to be non-negative). 

Again, the computation is conveniently carried out separately in the two subdomains $U_1, U_2$. For $U_2$ we rewrite $\mathcal{E}_{U_2}(C)=\text{Per}_{U_2}(C)+\int_{C\cap U_2}(g-\lambda) + \lambda |C\cap U_2|$ (for the $\lambda$ identified in the beginning of the section). The well-known formula (\cite{BarbDoCarmo}, \cite{BW1}) for the second variation of $\text{Per}_{U_2}(C)+\int_{C\cap U_2}(g-\lambda)$ for the set $E$ (that is stationary with respect to this functional in $U_2$, i.e.~$\p E$ has mean curvature $(g-\lambda)\nu$) yields 
$$\left.\frac{d^2}{dt^2}\right|_{t=0}\mathcal{E}_{U_2}(E_t)=\left(\int_{\p E \cap U_2} (D_{\nu} g - |A_{\p E}|^2) \zeta^2 + |\nabla \zeta|^2 \right)+ \lambda \left.\frac{d^2}{dt^2}\right|_{t=0} |E_t\cap U_2|,$$
where $D_\nu$ is the derivative in the direction $\nu$ and $\nabla$ is the gradient on the surface.
For the second term on the right-hand-side, we follow the proof of \cite[Proposition 17.8]{Maggi} using the second (rather than first) order expansion in $t$ of the Jacobian of the deformation. Expanding the diffeomorphism as $Id+t (\zeta \nu) + \frac{t^2}{2} Z +o(t^2)$, where $Z=s''(0)Y$ by construction, and using $\text{det}(I + t A + \frac{t^2}{2}B)=1+t\,\text{trace}(A) + \frac{t^2}{2}(\text{trace}(B) + (\text{trace}(A))^2-\text{trace}(A^2))+o(t^2)$, we get 
$$\left.\frac{d^2}{dt^2}\right|_{t=0} |E_t\cap U_2|=\int_{E\cap U_2} \text{div}Z +\int_{E\cap U_2}(\text{div}(\zeta \nu) )^2 - \int_{E\cap U_2}\text{trace}((D(\zeta \nu) )^2)=$$
$$=\int_{\p E\cap U_2} Z \cdot \nu +\int_{E\cap U_2}(\text{div}(\zeta \nu) )^2 - \int_{E\cap U_2}\text{trace}((D(\zeta \nu) )^2).$$
The last two integrals tend to $0$ as $R_0\to 0$, since so does $\sup_{U_2}|D (\zeta \nu)|$. For the first integral, recall that $s''(0)\to 0$ as $R_0 \to 0$, while $Y$ is fixed independently of $R_0$, so this term also tends to $0$ as $R_0\to 0$ and we conclude that
$$\left.\frac{d^2}{dt^2}\right|_{t=0} |E_t\cap U_2| \to 0 \,\,\text{ as } R_0\to 0.$$

For $U_1$, instead, we compute the derivative at $t=0^+$ of (\ref{eq:first_var_t}). The derivatives of the terms $\int_{\p M_t} \vec{n} \cdot (\chi \p_r)$ and $\int_{\p N_t} \vec{n} \cdot (\chi \p_r)$ at $t=0^+$ are each equal to $-2\pi$. This will be true regardless of how we choose $R_0$. 

The term $-\int_{M_t} \vec{H} \cdot (\chi \p_r)$ has derivative at $t=0^+$ given by 
\begin{equation}
 \label{eq:termHX}
\int_{M_0} (\vec{H}\cdot (\chi \p_r))^2 - \int_{M_0}\left. \frac{\p \vec{H}}{\p t}\right|_{t=0^+} \cdot (\chi \p_r) - \int_{M_0} \vec{H}\cdot \left( D(\chi \p_r) \cdot (\chi \p_r)  \right).
\end{equation}
(There is no boundary contribution since $\vec{H}$ and $(\chi \p_r)$ are bounded on $\overline{M_0}$ and $\p M_0$ is reduced to a point; in fact, one also has that $\vec{H}\cdot (\chi \p_r)$ tends to $0$ as we approach $\p M_0$.) For the second summand $\int_{M_0} \left. \frac{\p \vec{H}}{\p t}\right|_{t=0^+} \cdot (\chi \p_r)$ of (\ref{eq:termHX}) we make use of the computations in \cite[Appendix]{BarbDoCarmo}, noting that those computations are local and can be repeated\footnote{In our case, unlike in \cite{BarbDoCarmo}, the deformation of $M_0$ is not performed at fixed boundary, but $M_t$ moves so that the speed is orthogonal to the normal to $M_t$ at $\p M_t$. The contribution of the moving boundary to the first and second variations of area has been isolated in the term $\int_{\p M_t} \vec{n}\cdot (\chi \p_r)$. The remaining terms are treated as in \cite{BarbDoCarmo}, since those computations are local and no not use the fixed boundary condition until after formula \cite[p. 353]{BarbDoCarmo}. We point out that $f$ in \cite{BarbDoCarmo} corresponds to $\nu \cdot (\chi \p_r)$ in our case, and $\xi$ in \cite{BarbDoCarmo} corresponds to $\chi \p_r$. Also note that the convention in \cite{BarbDoCarmo} is to have the mean curvature $H_0$ equal to the average of the principal curvatures (while we take the sum).} until formula \cite[p. 353]{BarbDoCarmo}. The only difference is given by the fact that, instead of the constant $n H_0$, the third summand on the right-hand-side of the formula \cite[p. 353]{BarbDoCarmo} will have (in the integral) the mean curvature function $(\lambda-g)$. We thus find

$$\int_{M_0} \left.\frac{\p \vec{H}}{\p t}\right|_{t=0^+} \cdot (\chi \p_r) = -\int_{M_0}  ((\chi \p_r)\cdot \nu) \left(\Delta_{M_0} ((\chi \p_r)\cdot \nu)\right) - \int_{M_0}|A_{M_0}|^2 ((\chi \p_r)\cdot \nu)^2 +$$ $$+\int_{M_0} \nabla ((\chi \p_r)\cdot \nu) \,\,G,$$
where $\nabla$ is the gradient of the surface. The (vector-valued) function $G$ has modulus bounded above by a constant that only depends on $\sup_{M_0} |A_{M_0}|$, $\sup_U |D g|$, $\|u\|_{C^1(B^2_{3R})}$. (This comes from crude bounds on the last three terms of the analogue of formula \cite[p. 353]{BarbDoCarmo}; this will suffice for our purposes.)
Recall the following facts. The second fundamental form $A_{M_0}$ (and similarly the mean curvature $\vec{H}$) is bounded in $U$, since $M_0\subset D_1$ and $D_1$ is a smooth graph;
the area of $M_0$ grows at a quadratic rate around $0$. We then obtain the following bounds. Firstly,
$$\int_{M_0} ((\chi \p_r)\cdot \nu)^2\leq \int_{M_0} \chi^2 \,\,\,\text{ and } \,\,\, \int_{M_0} \chi$$
can be made as small as we wish by taking $R_0$ sufficiently small. Moreover (using that $\nu$ is normal to the surface), 
$$|\nabla ((\chi \p_r)\cdot \nu)| \leq|A_{M_0}| \chi.$$
Then 
$$\int_{M_0} \left.\frac{\p \vec{H}}{\p t}\right|_{t=0^+} \cdot (\chi \p_r) = \int_{M_0} \left|\nabla_{M_0} ((\chi \p_r)\cdot \nu)\right|^2 - \int_{M_0}|A_{M_0}|^2 ((\chi \p_r)\cdot \nu)^2 +\int_{M_0} \nabla ((\chi \p_r)\cdot \nu) \,\,G\,\,,$$
$$\text{and} \,\,\ \int_{M_0}(\vec{H}\cdot (\chi \p_r))^2= \int_{M_0} |\vec{H}|^2((\chi \p_r)\cdot \nu)^2$$
can be made as small as we wish by choosing $R_0$ suitably small. For the remaining term in (\ref{eq:termHX}), namely
$$\int_{M_0} \vec{H}\cdot \left( D(\chi \p_r) \cdot (\chi \p_r)\right)=\int_{M_0} (\vec{H}\cdot D \chi)\chi + \int_{M_0} \chi^2 \vec{H}\cdot \left(D(\p_r) \cdot \p_r\right),$$
we observe that it can be made as small as we wish by taking $R_0$ sufficiently small, because $|D(\p_r)|\leq C/r$ is summable on $M_0$ and $\int_{M_0} |D\chi|\leq C R_0$ (where $C$ denotes a dimensional constant).
This concludes the proof that we can make the absolute value of the derivative at $t=0^+$ of the term $-\int_{M_t} \vec{H}\cdot X$ as small as we wish, by choosing a small enough $R_0$.

We now consider the derivative of the third term in (\ref{eq:first_var_t}), that is $\int_{M_t} g\,\nu \cdot (\chi \p_r)$, at $t=0^+$. This derivative is given by
$$-\int_{M_0} g\,(\nu \cdot (\chi \p_r)) (\vec{H}\cdot (\chi \p_r)) + \int_{M_0} \left.\frac{d}{dt}\right|_{t=0^+} \left(g \chi \,\nu\cdot \p_r\right) .$$ 
Using again that $\left|\left.\frac{d}{dt}\right|_{t=0^+}\nu\right| \leq C |D(\chi \p_r)|$ (for some dimensional constant $C$) we obtain that this derivative is bounded in modulus by 
$$ C\sup_U(|g|+|\nabla g|)\left( \int_{M_0} \chi |\vec{H}| +\int_{M_0} |\chi | + \int_{M_0} |D \chi|+\int_{M_0} \chi |D(\p_r)|\right).$$
This term can also be made as small as we wish, as above, taking $R_0$ small. 

We repeat identical considerations for $N_0$ to handle the fourth, fifth and sixth terms in (\ref{eq:first_var_t}). 
In conclusion, the second variation with respect to $\mathcal{E}_U$ along $E_t$ at $t=0^+$ is given by $-4\pi$ (the derivative at $t=0^+$ of the second and fifth summands in (\ref{eq:first_var_t})) plus other terms that can be made small in module, say smaller than $\pi$, by a suitable initial choice of $R_0$. Hence we have a strictly negative second variation, contradiction. The structure \textbf{(a')} then cannot be anywhere present in $\Om$. The embeddedness of $\p^* E \cap \Om$ claimed in Theorem \ref{thm:main} is proved. It follows immediately that $\p^* E \cap \Om = \p E \cap \Om$ (when $E$ is its Lebesgue representative) and (using Propositions \ref{Prop:Lagr_mult} and \ref{Prop:Lagr_mult_mean_curv}) that the mean curvature is given by $(\lambda -g)\nu_E$ for $\lambda \in \R$ depending on the connected component of $\overline{E} \cap \Om$. This finishes the proof of Theorem \ref{thm:main}.

\section{Curvature bounds and droplets}
\label{droplets}

An additional question, in fact closely related to the ruling out of configuration \textbf{(a')} in Theorem \ref{thm:main}, is whether it is possible to have regions in which $\p E$ is smoothly embedded, however with points of ``very high curvature''. Ruling out \textbf{(a')} in Section \ref{finalproof} amounts to a negative answer in the ``limit case'' in which a connecting neck between two bulks of liquid is reduced to a point. However, the presence of embedded portions with arbitrarily high curvature is not ruled out in the conclusion of Theorem \ref{thm:main}. We will now give an additional conclusion and some further remarks in this direction.

\medskip

On one hand, we point out that, as we are allowing in Theorem \ref{thm:main} multiple connected components of $\overline{E} \cap \Om$, it is possible, say with $g\equiv 0$, that $E$ is the union of countably many disjoint balls $B_n$ with the radius of $B_n$ tending to $0$ as $n\to \infty$ sufficiently fast. (Note that balls are minimisers of the perimeter for constrained volume, hence stable.) Therefore it is possible to have $|A_{\p E}|$ unbounded in the conclusion of Theorem \ref{thm:main}. 

On the other hand, if we restrict to a connected $E$, or to each single connected component of $E$, we can add the following claim to the conclusions of Theorem \ref{thm:main}:

\medskip

\noindent \textbf{Bounded curvature in the interior.} \textit{Given $W\subset \subset \Om$, for any connected component $E_0$ of $E \cap \Om$ the curvature of $\p E_0 \cap W$ is bounded above, in other words there cannot be a sequence of points $y_n \in \p E_0 \cap W$ with $|A_{\p E}(y_n)|\to \infty$. }

\medskip

\begin{proof}[proof of the claim]
Recall that each connected component $E_0$ of $\overline{E} \cap \Om$ has mean curvature $(\lambda_{E_0} - g) \nu$ for some $\lambda_{E_0} \in \R$ (depending on the connected component). Consider a sequence of points $y_n \in  \p E_0 \cap W$ such that $|A_{\p E}(y_n)|\to \sup_W |A_{\p E}|$. There exist $R>0$ such that $B^3_R(y_n) \subset \Om$ for all $n$, and $K>0$ such that $\mathcal{H}^2(\p E_0 \cap B^3_R(y_n))\leq K$ for all $n$. (The existence of $R$ follows from the fact that $W\subset \subset \Om$. Then, upon possibly making $R$ smaller, we can ensure the second fact for any given $K>1$ thanks to the almost monotonicity formula and to the fact that all points of $\p E \cap \Om$ have density $1$.) Then we can consider the class of Caccioppoli sets $C\subset \R^3$ with embedded boundary in $B_R^3(0)$, such that $\mathcal{H}^2(\p C \cap B_R^3(0)) \leq K$, the mean curvature of $\p C$ in $B_R^3(0)$ is bounded in modulus by $\max_{\overline{W}} |g| + \lambda_{E_0}$, and that are stationary and stable in $B_R^3(0)$ with respect to $\mathcal{H}^2(\p C \cap B_R^3(0)) + \int_{B_R^3(0) \cap C} \tilde{g}$ for volume-preserving ambient deformations, for some smooth $\tilde{g}$ with $|\tilde{g}|\leq \lambda_0 + \max_{\overline{W}} |g|$. For this class the curvature estimates in \cite[Theorem 1]{BCW} (more precisely, the easy generalization to the case of non-constant mean curvature discussed in \cite[Remark 4]{BCW}) give an a priori bound on the curvature. The surfaces $\p E_0 \cap B_R^3(y_n)$, translated by the vector $y_n$, belong to the class considered. Hence $\sup_W |A_{\p E}|<\infty$. (Alternatively, rather than employing \cite[Theorem 1]{BCW}, one can easily adapt the blow up argument in the proof of \cite[Theorem 1]{BCW} to the present situation.)
\end{proof}

\begin{oss}
This boundedness of the curvature, that we obtained from \cite{BCW}, can be interpreted as a reason to expect that \textbf{(a')} must contradict the stability condition in Theorem \ref{thm:main}, with the caveat that coalescence starting from a tiny but embedded meniscus can be induced by an ambient deformation, while the same is not true for \textbf{(a')}. Indeed, coalescence or break up deformations treat \textbf{(a')} as a degenerate immersion of a cylinder and the curvature of this degenerate immersion is infinite.
\end{oss}

\begin{oss}
As a side note, it could be interesting to additionally investigate a priori or quantitative bounds on the curvature, for a suitable class of boundaries. For example, the class of sets $E$ such that $\overline{\p E \cap \Om} \cap \p \Om \neq \emptyset$, with suitable upper bounds on norms of the potential $g$ (and possibly further bounds). 
It appears to be necessary to restrict to an open set $W$ with closure compactly contained in $\Om$. For otherwise we may consider, with $g\equiv 0$ and $\Om = \{z>0\}$, a hemisphere with the equator lying on $\{z=0\}$: as we choose small and smaller radius, the hemisphere will lie outside any fixed $W \subset \subset \Om$ and the curvature will tend to $\infty$. Alternatively, with $g\equiv 0$ and $\Om = \{(x,y,z):|x|< 1, |y|< 1, z>0\}$ we may consider a sequence of surfaces $S_n$, where each surface $S_n$ is a portion of a Delaunay CMC unduloid, with mean curvature independent of $n$, as follows: the rotational axis of the unduloid is the line $\{(0, y, 0)\}$, and the unduloids degenerate towards a union of tangential spheres as $n\to \infty$; the surface $S_n$ is the portion of the unduloid that is in $\Om$. (Recall that the Delaunay family of unduloids depends continuously on a parameter and it interpolates between a cylinder and a union of tangential spheres, keeping the same value for the mean curvature.) Then the maximum of the curvature of $S_n$ in $\Om$ tends to $\infty$ as $n\to \infty$, and this maximum is achieved at a point $x_n\in S_n$ such that $x_n$ accumulate on $\p \Om$ as $n\to \infty$. (These points $x_n$ are on the necks of the unduloids.) Note that both the hemisphere and the surface $S_n$ are graphs over the $(x,y)$-plane, hence stable.
\end{oss}

\medskip

In the small volume regime for $E$ one speaks of ``droplets''. We assume that in Theorem \ref{thm:main} we have no solid supports and $E$ bounded and with connected closure. The analysis in \cite[Section 1.2]{CirMaggi} implies that (from stationarity for volume-preserving ambient deformations with respect to $\mathcal{E}_{\R^3}$) the so-called Alexandrov deficit tends to $0$ when the volume $|E| \to 0$. Therefore for droplets, i.e.~for sufficiently small $|E|$, the Alexandrov deficit is small and, in view of \cite[Theorem 1.1 (i)]{CirMaggi}, the surface $\p E$ has to be given by a perturbation of a collection of spheres. (The $C^2$ embeddedness of $\p E$ is an assumption in \cite{CirMaggi}, while it was a conclusion in our Theorem \ref{thm:main}. On the other hand, \cite[Theorem 1.1 (i)]{CirMaggi} makes use only of stationarity.) The perturbation, given quantitatively in \cite{CirMaggi}, can be qualitatively described as follows: a large portion of $\p E$ is graphical over the spheres, the remaining part of $\p E$ is made of small connecting necks between distinct spheres. The Alexandrov deficit is scale invariant, so this description of $\p E$ is given in a normalised way, setting the spheres to have unit radius. In this normalised sense, the length of the connecting necks tends to $0$ as the Alexandrov deficit tends to $0$, and in the limit, as the deficit tends to $0$, one obtains a collection of spheres with equal radii touching tangentially.

We have the following corollary, which follows immediately from Theorem \ref{thm:main}, and from the results in \cite{CirMaggi} and \cite{BCW}.

\begin{cor}
\label{cor:droplets}
Let $E\subset \R^3$ be (the Lebesgue representative of) a Caccioppoli set, with $\overline{\p^* E} \subset B$ for some open ball $B$, $\overline{E}$ connected, and let $g:B \to \R$ be analytic. Assume that $E$ is stationary and stable with respect to $\mathcal{E}_B$ in the sense of Definition \ref{Dfi:station_stable_full}. There exists a constant $m_0>0$ such that if $|E|<m_0$ then $\p E = \p^* E$ is the graph of a function over a single sphere, with gradient bounded in the sup norm by $C \|g\|_{C^1} |E|^\beta$, for (dimensional) constants $C$ and $\beta$.
\end{cor}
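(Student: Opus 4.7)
The plan combines three ingredients: the embeddedness from Theorem \ref{thm:main}, the quantitative Alexandrov-type description of almost-stationary droplets in \cite{CirMaggi}, and the curvature bounds of \cite{BCW}. First, Theorem \ref{thm:main} gives that $\p E = \p^* E$ is a smoothly embedded analytic surface in $B$ with mean curvature vector $(g-\lambda)\nu_E$ for some $\lambda \in \R$; this validates the standing $C^2$-embeddedness hypothesis of \cite{CirMaggi} which would otherwise have to be assumed. Next, by the analysis in \cite[Section 1.2]{CirMaggi}, stationarity for volume-preserving ambient deformations forces the Alexandrov deficit $\delta(E)$ (measuring the oscillation of the mean curvature of $\p E$ around its average) to be controlled by a positive power of $\|g\|_{C^1}|E|$, and in particular $\delta(E) \to 0$ as $|E| \to 0$. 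Then \cite[Theorem 1.1(i)]{CirMaggi} yields, after normalising by the characteristic length $|E|^{1/3}$, a quantitative $C^1$-proximity of $\p E$ to a rosary: a chain of $N \geq 1$ unit spheres meeting tangentially, away from small neighbourhoods of whose contact points $\p E$ is the graph of a function over the spheres with $C^1$-norm controlled by a power of $\delta(E)$.

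The key step is to rule out $N \geq 2$. Heuristically, in the limit $|E|\to 0$, a rosary with $N\geq 2$ produces the structure \textbf{(a')} at each contact point, and Proposition \ref{Prop:a'} has shown this to be inconsistent with stability. To make this rigorous for positive but small $|E|$, I would appeal to the curvature bound already recorded in Section \ref{droplets}: since $\overline{E}$ is connected and $\overline{\p E}\subset B$, the argument based on \cite{BCW} (together with the smoothness supplied by Theorem \ref{thm:main}) yields a uniform upper bound on $|A_{\p E}|$. After rescaling so that the rosary spheres have unit radius, the rescaled surfaces form a family of volume-constrained stable prescribed-mean-curvature surfaces to which \cite[Theorem 1]{BCW} (in the non-constant mean curvature form of \cite[Remark 4]{BCW}) applies with uniform constants, giving a uniform bound on the rescaled $|A|$. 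On the other hand, a rosary with $N\geq 2$ near-tangent unit spheres must develop neck points whose rescaled curvature diverges as $\delta(E)\to 0$; this contradicts the uniform bound once $|E|<m_0$ for some threshold $m_0>0$, forcing $N=1$.

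With $N=1$, the quantitative estimate in \cite[Theorem 1.1(i)]{CirMaggi} gives directly that $\p E$ is a graph over a single sphere with gradient bounded in sup norm by a power of $\delta(E)$, and hence by $C \|g\|_{C^1}|E|^\beta$ for appropriate constants $C$ and $\beta$. The main obstacle, in my view, is the rescaling argument in the previous paragraph: one must set up the blow-down so that the Cicalese--Maggi perturbation estimates and the Bellettini--Chodosh--Wickramasekera curvature estimates are simultaneously uniform in the scaling parameter, and then isolate the neck-curvature blow-up as a genuine geometric contradiction rather than a scaling artefact. A clean alternative would be a compactness-contradiction scheme: assume a sequence $E_k$ with $|E_k|\to 0$ and $N\geq 2$ in the Cicalese--Maggi decomposition; rescale to unit spheres; pass to a subsequential limit (using the uniform curvature bound to ensure $C^{1,\alpha}$ convergence away from the necks and integral-varifold convergence at the necks) to produce a configuration that is either already of type \textbf{(a')} or can be shown to contain it, and then invoke Proposition \ref{Prop:a'} together with the lower semicontinuity of the non-negativity of second variation along the coalescence deformation to derive the contradiction.
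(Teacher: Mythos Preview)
Your proposal follows essentially the same route as the paper: Theorem \ref{thm:main} supplies the $C^2$-embeddedness needed for \cite{CirMaggi}, whose Theorem 1.1(i) then gives the rosary description with Alexandrov deficit tending to $0$, and the curvature estimates of \cite{BCW} applied to the \emph{rescaled} surfaces force $N=1$ by contradicting the neck-curvature blow-up (the paper verifies the hypotheses of \cite{BCW} for the rescaled family via the normalisation and the local convergence to two unit spherical caps). Minor note: the reference is Ciraolo--Maggi, not Cicalese--Maggi; and your alternative compactness scheme invoking Proposition \ref{Prop:a'} is not what the paper does---the paper relies solely on \cite{BCW} for this step.
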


\begin{proof}
Theorem \ref{thm:main} gives the fact that $\p^* E=\p E$ is smoothly embedded, which permits the use of \cite[Theorem 1.1 (i)]{CirMaggi} to conclude that $\p E$ is given by a perturbation of a finite collection of spheres, as described above. As $|E|$ tends to $0$ the Alexandrov deficit tends to $0$ (\cite[Section 1.2]{CirMaggi}). The Alexandrov deficit is scale-invariant and as $|E|\to 0$ the normalised spheres get closer and closer. This implies that, if a configuration with two or more spheres is possible, then the curvature of the normalised boundary, that we denote by $\p \hat{E}$, blows up as $|E|$ tends to $0$, while the mean curvature of $\p \hat{E}$ remains bounded. On the other hand, \cite[Theorem 1]{BCW} (see also \cite[Remark 3]{BCW}) provides a priori pointwise curvature bounds under a condition of stability for volume-preserving deformations, thus ruling out the possibility of two or more spheres. (The mean curvature bound required by \cite{BCW} is true by the normalization procedure that leads to $\hat{E}$; the mass bound required by \cite{BCW} is true in view of the fact that we have local convergence of $\p \hat{E}$ to two spherical caps with unit multiplicity, as $|E|\to 0$.) We thus find that under the assumptions of Corollary \ref{cor:droplets} there exists a sufficiently small volume regime in which there can only be a single sphere in the conclusion of \cite[Theorem 1.1 (i)]{CirMaggi}. We refer to \cite{CirMaggi} for the quantitative estimates on the function whose graph gives $\p E$. 
\end{proof}

\begin{oss}
If $E$ is assumed to have smooth embedded boundary and to be a local minimiser then the conclusion that there can be only a single sphere follows from \cite[Theorem 1.1 (ii)]{CirMaggi}, by means of density estimates for minimisers.
\end{oss}

An interesting question, not addressed here, is to quantify the smallness condition on the volume that ensures closeness to a single sphere in Corollary \ref{cor:droplets}.

\appendix
\section{Remarks on the assumptions of Theorem \ref{thm:main}}
\label{B}

\textit{Weaker variational assumptions.} It is possible to weaken the assumptions in Theorem \ref{thm:main}, as follows. In (i) of Definition \ref{Dfi:station_stable_full} it suffices to require (\ref{eq:stability}) for volume-preserving ambient deformations with initial velocity given by a $C^1_c$ vector field whose support is contained in an open set $W$ such that $\p^*E \cap W$ is a smoothly embedded surface. This set cannot be a priori characterised (hence the weaker assumption may be of interest only for more abstract purposes), however it follows from (\ref{eq:stationarity}), from Allard's theorem and from standard elliptic PDE theory that this set is not empty. The fact that this assumption suffices is due to the fact that it is also a sufficient assumption in Theorem \ref{1} (see the third hypothesis). In (ii) of Definition \ref{Dfi:station_stable_full}, when the starting configuration is \textbf{(a)} or \textbf{(b)} it suffices to require stationarity for coalescence and break up deformations. The stability condition in (ii) of Definition \ref{Dfi:station_stable_full} is only used to handle \textbf{(a')}.

\medskip

\textit{Analyticity of the potential.} 
The fact that $g$ is analytic is only used in deducing Theorem \ref{1} from the more general results in \cite{BW}, \cite{BW1}. Namely, to conclude that whenever two (non-identically coinciding) smooth embedded $2$-dimensional disks $D_1, D_2$ of mean curvature $g$ intersect only tangentially, then the coincidence set is an analytic set and therefore admits a stratification as the union of analytic submanifolds of dimensions $\in \{0, 1\}$. A $1$-dimensional coincidence set forces the existence of the local structure \textbf{(a)}, which is however ruled out by assumption in Theorem \ref{1}. It then follows that when $D_1$ and $D_2$ are separately smooth and stationary, $D_1 \cap D_2$ can only contain isolated points. This dimensional estimate allows the extension of the stability inequality, which is valid on the embedded parts of $D_1 \cup D_2$ thanks to the stability for ambient deformations, across those points (by a standard capacity argument). This extended inequality gives the hypothesis stated in \cite{BW}, \cite{BW1}, i.e.~a stability condition for the $C^2$-immersed portion of $\spt{V}$. 
The analysis of the coincidence set of two $C^2$ disks with mean curvature $\pm (\lambda-g)$ is the only step in which the analyticity of $g$ is used in the proof of Theorem \ref{thm:main}.
If $D_1$ and $D_2$ are only assumed to be $C^2$, then the coincidence set may be in principle more complicated.
Understading its structure is certainly an interesting problem, that would likely require PDE tools to be tackled.

\medskip

\textit{Dimensional restriction.}
We have worked in $\R^3$, which is the concretely relevant case to describe liquids. It is mathematically interesting to address the problem in arbitrary dimension. Many things remain unchanged, with the following exceptions. 

A higher-dimensional analogue of Theorem \ref{thm:main} would have to allow for a singular set of dimension $\leq n-7$ (see \cite{BW}, \cite{BW1}). This is well-known already in the minimising case. The higher-dimensional analogue of Theorem \ref{1} replaces \textbf{(a)} and \textbf{(b)} with the corresponding structures in which surfaces(-with-boundary) are replaced by hypersurfaces(-with-boundary) and $T$ has dimension $n-1$; this theorem then permits, in the conclusion, a singular set of dimension $\leq n-7$, where $\overline{\p^* E}$ is locally neither an embedding nor an immersion (at these points one has tangent cones but not tangent planes); moreover, there may be a set of immersed but non-embedded points of dimension $\leq n-2$, where $\overline{\p^* E}$ is locally given by two embedded disks that intersect tangentially. 
Then for a higher-dimensional analogue of Theorem \ref{thm:main} one would need to exclude more configurations (depending on the dimension of this intersection), rather than the single configuration \textbf{(a')}. This is likely possible although it probably requires a slight revision of the stability assumption in Definition \ref{Dfi:station_stable_one_sided}, by allowing the initial speed of the one-sided deformation to be unbounded (and belong to a suitable Sobolev space), or, by prescribing lower regularity in $(t,x)$ for the coalescence deformations. In $\R^3$ we required bounded initial speed for any virtual deformation allowed (since we wanted deformations that can be considered adimissible for a concrete liquid).

We also point out that a higher dimensional extension of Theorem \ref{thm:main} would also lead to a higher-dimensional version of Corollary \ref{cor:droplets}, since neither \cite{CirMaggi} nor \cite{BCW} require the dimensional restriction (in the case of \cite{BCW}, for $n\geq 7$ one would use \cite[Theorem 2]{BCW} rather than \cite[Theorem 1]{BCW}).

\section{Reminders on Caccioppoli sets}
\label{Caccioppoli}
A Caccioppoli set in $\R^{n+1}$, or set with locally finite perimeter, is a set $C \subset \R^{n+1}$ whose characteristic function $\chi_C$ is $BV_{\text{loc}}$, i.e.~the distributional gradient $D \chi_C$ is a Radon measure (regular and finite on compact sets). The perimeter of $C$ in a bounded open set $U$ is given by 
$$\text{Per}_U(C)=\int_U \|D \chi_C\| = \sup\left\{\int_C \text{div}\, T : T\in C^1_c(U;\R^{n+1}), \sup|T|\leq 1\right\}.$$
If $C$ has $C^2$ boundary this agrees with the usual perimeter $\mathcal{H}^{n}(\p C \cap U)$. This follows by observing firstly that, using $\sup|T|\leq 1$ and the divergence theorem, $|\int_C \text{div}\, T |\leq \mathcal{H}^{n}(\p C \cap U)$ for any allowed $T$; secondly, that by choosing $T$ to be a $C^1$ extension of the outer unit normal to $C$, suitably cut-off inside $U$, one gets as close as wished to the value $\mathcal{H}^{n}(\p C \cap U)$. If $\overline{C}\subset \subset U$ and $\p C$ is $C^2$, then a smooth estension of the unit outer normal used in place of $T$ gives exactly the full perimeter $\mathcal{H}^n(\p C)$.

The ``correct'' notion of boundary for a Caccioppoli set $C\subset \R^{n+1}$ is the so-called reduced boundary $\p^* C$, introduced by De Giorgi \cite{DG1}, \cite{DG2}. A Caccioppoli set is is defined up to $0$-measure sets, and the topological boundary changes with the representative. Moreover, it is seen by means of elementary examples that the topological boundary $\p C$ may be even the whole space. The reduced boundary $\p^* C$ is a measure-theoretic notion and it is invariant under the choice of representative. It is characterised by the existence of a well-defined measure-theoretic unit normal. De Giorgi's fundamental theorem guarantees the $n$-rectifiability of $\p ^* C$, namely the fact that $\p ^* C$ is, up to a set of vanishing $n$-dimensional measure, a countable union of Lipschitz images of Borel subsets of $\R^n$. The perimeter measure $-D \chi_C$ can be written as $({\Hc}^n \res \p^*C) \nu$, where $\nu$ is the measure-theoretic outer unit normal on $\p^* C$. If $C$ has $C^2$ boundary then $\p^* C = \p C$ and $\nu$ is the usual normal. 

\medskip

\medskip

\medskip

\noindent \textbf{Acknowledgments}. This work was partially supported by the EPSRC under grant EP/S005641/1. I would like to thank Giovanni Alberti, Guido De Philippis, Francesco Maggi, Robb McDonald and Helen Wilson for interesting and helpful conversations and comments on the topic. Further thanks to Timothy Johnston for useful discussions had while he worked on a summer project at UCL (EPSRC 2019 vacation bursary) under my supervision. This note was partially written while I was member of the Institute for Advanced Study, Princeton, in 2019: I gratefully acknowledge the excellent research environment and the support provided by the Institute and by the National Science Foundation under Grant No. DMS-1638352.

\end{document}